\newcommand{\bbZ}{\mathbb{Z}}
\newcommand{\bfB}{\mathbf{B}}
\newcommand{\bfC}{\mathbf{C}}
\newcommand{\bfD}{\mathbf{D}}
\newcommand{\bfG}{\mathbf{G}}
\newcommand{\bfS}{\mathbf{S}}
\newcommand{\rmB}{\mathrm{B}}
\newcommand{\rmC}{\mathrm{C}}
\newcommand{\rmE}{\mathrm{E}}
\newcommand{\rmK}{\mathrm{K}}
\newcommand{\rmP}{\mathrm{P}}
\newcommand{\rmS}{\mathrm{S}}
\newcommand{\rmZ}{\mathrm{Z}}
\newcommand{\calN}{\mathcal{N}}
\newcommand{\calZ}{\mathcal{Z}}
\DeclareMathOperator{\id}{id}
\DeclareMathOperator{\Hom}{Hom}
\DeclareMathOperator{\Rep}{Rep}
\DeclareMathOperator{\Hex}{Hex}
\DeclareMathOperator{\Tot}{Tot}
\DeclareMathOperator{\Mor}{Mor}
\DeclareMathOperator{\Map}{Map}
\DeclareMathOperator{\Obj}{Obj}
\DeclareMathOperator{\Ho}{Ho}
\DeclareMathOperator{\holim}{holim}
\newtheorem{theorem}{Theorem}
\newtheorem{proposition}[theorem]{Proposition}
\newtheorem{corollary}[theorem]{Corollary}
\newtheorem{lemma}[theorem]{Lemma}
\theoremstyle{definition}
\newtheorem{definition}[theorem]{Definition}
\newtheorem{example}[theorem]{Example}
\newtheorem{remark}[theorem]{Remark}
\numberwithin{theorem}{section}
\numberwithin{equation}{section}
\title{Homotopy coherent centers versus\\ centers of homotopy categories}
\author{Markus Szymik}
\date{May 2013}
\begin{document}

\maketitle

\renewcommand{\abstractname}{}

\begin{abstract}
\noindent
Centers of categories capture the natural operations on their objects. Homotopy coherent centers are introduced here as an extension of this notion to categories with an associated homotopy theory. These centers can also be interpreted as Hochschild cohomology type invariants in contexts that are not necessarily linear or stable, and we argue that they are more appropriate to higher categorical contexts than the centers of their homotopy or derived categories. Among many other things, we present an obstruction theory for realizing elements in the centers of homotopy categories, and a Bousfield-Kan type spectral sequence that computes the homotopy groups. Nontrivial classes of examples are given as illustration throughout.

\vspace{\baselineskip}
\noindent MSC: 
primary 
18G50, 
55U40, 
secondary 
16E40, 
18G40, 
55S35 

\end{abstract}


\section*{Introduction}\label{sec:intro}

If~$\bfC$ is a small category, then one may ask for a description of all natural operations on its objects. These are the families~$\Phi=(\Phi_x\colon x\to x\,|\,x\in\Obj(\bfC))$ of endomorphisms that are natural in the objects~$x$, and they form a monoid under composition. In fact, a more conceptual description of this monoid presents it as the endomorphism monoid of the identity functor~$\bfC\to\bfC$ in the category of all such functors and natural transformations. Naturality implies immediately that this monoid is always abelian. This observation is usually attributed to Eckmann and Hilton. As a classical example, for the category of commutative rings of characteristic~$p$, where~$p$ is a prime number, this monoid is the free (abelian) monoid on one generator: Frobenius. On the other hand, if the category in question is a monoid itself--a category with one object, then we have just given a long-winded description of the center of this monoid, the subset of elements that commute with all of its elements. In general, the endomorphism monoid of the identity functor is often referred to as the \emph{center} of~$\bfC$, for example in~\cite[II, \S 2]{Bass} and~\cite[II.5, Exercise~8]{MacLane}, and we will follow this terminology. Bernstein, in~\cite[1.9]{Bernstein}, defined the center of abelian categories, but his main object of interest was the category of smooth representations of a~$p$-adic group~$G$. More recently, the center (and graded versions thereof) has been investigated in various derived contexts, for examples for the derived categories of modules over (non-commutative) algebras, derived categories of coherent sheaves in algebraic geometry, as well as stable module categories in representation theory, see for example~\cite{Lowen+vandenBergh}, \cite{Avramov+Iyengar}, \cite{Buchweitz+Flenner}, and~\cite{Krause+Ye}. In all of these cases, one has been studying the homotopy category of an underlying homotopy theory. 

In the present paper, we introduce a refined notion of center for categories~$\bfC$ that admit a homotopy theory, and call it the homotopy coherent center~$\calZ(\bfC)$. This center is defined directly within~$\bfC$ rather than on the level of the homotopy category. Briefly, its elements will determine families~$\Phi=(\Phi_x\colon x\to x\,|\,x\in\Obj(\bfC))$ of endomorphisms in~$\bfC$, but it is no longer required that these are natural in the strict sense. Instead, these elements will also come with continuously chosen homotopies~\hbox{$\Phi_yf\simeq f\Phi_x$} for all arrows~\hbox{$f\colon x\to y$} in~$\bfC$ and additional higher homotopies that, for example, show that for any other arrow~$g\colon y\to z$ the two evident homotopies~\hbox{$\Phi_zgf\simeq gf\Phi_x$} that can now be obtained from~$\Phi_f$,~$\Phi_g$, and~$\Phi_{gf}$, are also homotopic. The actual formulas will bear very close resemblance to those used in the definition of Hochschild(-Mitchell) cohomology~\cite{Mitchell}, but we will work in a non-linear and unstable context: simplicial categories.

Simplicial categories are categories that are enriched in simplicial sets. (The reader unfamiliar with simplicial categories will be able to replace them by topological categories, but even in that setting (co)simplicial methods are indispensable for our approach.) Simplicial categories are by now a well-established context in which to do homotopy theory, and there is even a homotopy theory of simplicial categories themselves, see~\cite{Bergner}. This is relevant here because we show that our homotopy coherent centers are invariant under the corresponding notion of weak equivalence for simplicial categories, Dwyer-Kan equivalence~(Theorem~\ref{thm:equivalences}). We also note that the present definition automatically extends to other contexts that have an associated homotopy theory, such as Quillen model categories~\cite{Quillen} or quasi-categories, see~\cite{Boardman+Vogt},~\cite{Joyal}, and~\cite{Lurie}, by passing to the associated simplicial categories that they define. 

We show that the homotopy coherent center of any simplicial category has a canonical~$E_2$ multiplication~(Theorem~\ref{thm:E2}). This means, in particular, that these centers are~$A_\infty$ monoids~(coherently associative, i.e. have an action of an~$A_\infty=E_1$ operad), and that the multiplication is homotopy commutative~(because it extends to an action of an~$E_2$ operad). This is the analogue of Eckmann-Hilton in the present setting, and it is proved using the methods introduced by McClure and Smith in~\cite{McClure+Smith:Deligne},~\cite{McClure+Smith:survey}, and~\cite{McClure+Smith:AJM} for their solution of the Deligne conjecture.

In Section~\ref{sec:monoids}, we discuss the examples given by simplicial monoids and, in particular, simplicial groups. In the latter case, the homotopy coherent centers are related to other notions of centers studied before in homotopical group theory, see the ICM surveys~\cite{Dwyer:ICM} and~\cite{Grodal}. For example, if~$G$ is a simplicical group, then its center consists of the fixed points under the conjugation action, and the homotopy fixed points are equivalent to the homotopy coherent center as defined here (Theorem~\ref{thm:homotopy_fixed_points}). On the other hand, the homotopy center of a~$p$-compact group is defined as the loop space of the space of self-maps of the classifying space based at the identiy, see~\cite{Dwyer+Wilkerson}, and this is also equivalent to the homotopy coherent center in the case where both of them are defined~(Corollary~\ref{cor:DWmodel}).

Back in our general context, there is also a very naive and rigid notion~$\rmZ(\bfC)$ of a center for simplicial categories~$\bfC$ based on (strict) equalities. The relation of the homotopy coherent center to this simplicial center takes the form of a morphism
\[
\rmZ(\bfC)\longrightarrow\calZ(\bfC).
\]
The question whether such a map from a strict limit to a homotopy limit is an equivalence (or at least some sort of completion) is called  a homotopy limit problem, see~\cite{Thomason:HLP} and~\cite{Carlsson:HLP}. We will discuss this question for the map displayed above in Section~\ref{sec:homotopy_limit_problem} where we also give an example that shows that the situation in the present context is more complicated.

There is a canonical map~$\calZ(\bfC)\to\rmZ(\Ho\bfC)$ from the homotopy coherent center to the center (in the ordinary sense) of its homotopy category. Because the target is discrete, this map factors through the components:
\[
\pi_0\calZ(\bfC)\longrightarrow\rmZ(\Ho\bfC).
\]
In general, this latter map will be neither surjective nor injective. We explain this difference between the homotopy coherent centers of simplicial categories~$\bfC$ and the centers of their homotopy categories~$\Ho\bfC$ in Section~\ref{sec:otss}. There, using the methods initiated in~\cite{Bousfield+Kan} and~\cite{Bousfield}, we provide means to study the failure of surjectivity by developing an obstruction theory~(Theorem~\ref{thm:ot}) for the realization of an element in the target by an element in the homotopy coherent center. As for injectivity, we show that there is a spectral sequence~(Theorem~\ref{thm:ss}) with~$E_2^{0,0}\cong\rmZ(\Ho\bfC)$ that not only targets the kernel of that map but also the higher homotopy groups of the homotopy coherent center, information that is entirely invisible from the perspective of the homotopy category. The passage to the center of the homotopy category reappears from this viewpoint as an edge homomorphism. 

Fringed Bousfield-Kan type spectral sequences in general unstable contexts may have less pleasant algebraic algebraic behavior than the spectral sequences of abelian groups that one usually meets in homological algebra. In our situation, the homotopy commutative multiplication of the coherent centers leads to some simplification. This is illustrated in Section~\ref{sec:groups}, where the details are spelled out for the simplicial category of groups, where the homotopy theory is induced by conjugation, and which is related to the theory of bands~\cite{Giraud}.

The final Section~\ref{sec:groupoids} deals with another class of examples of simplicial categories: simplicial groupoids. We show that the theory presented here is related to very classical and difficult questions in (unstable) homotopy theory such as spaces of homotopy self-equivalences. That section also includes examples of simplicial categories where the homotopy coherent centers have higher homotopy groups, and where the map~\hbox{$\pi_0\calZ(\bfC)\to\rmZ(\Ho\bfC)$} is not injective or surjective.


\section{Simplicial categories and their centers}\label{sec:simplicial}

Unless otherwise stated, {\em spaces} are simplicial sets. If~$X$ and~$Y$ are simplicial sets, then~$\Mor(X,Y)$ will denote the set of simplicial maps~$X\to Y$. These are the vertices in the mapping space~$\Map(X,Y)$. The general formula is~$\Map(X,Y)_n=\Mor(\Delta^n\times X,Y)$.

A {\em simplicial category}, for us, is a small category~$\bfC$ that is enriched in spaces, and we will write
\[
\bfC(x,y)
\]
for the space of maps from the object~$x$ to the object~$y$. The only exception is the category~$\bfS$ of simplicial sets, where we have agreed to write~$\Map(X,Y)=\bfS(X,Y)$. By~$c\bfS$ we will denote the simplicial category of cosimplicial spaces.

A simplicial category is {\em locally Kan} if the mapping spaces~$\bfC(x,y)$ are Kan complexes for all choices of objects~$x$ and~$y$. This technical condition is satisfied in most examples of interest, and can be always be arranged up to weak equivalence. For example, the category~$\bfS$ itself is not locally Kan, but the full subcategory of Kan complexes is. 

A simplicial category is a simplicial object in small categories with constant object space. If~$n\geqslant0$, then will use~$\bfC_n$ as our notation for the~(ordinary) category of~$n$-simplices in~$\bfC$. In particular, we will call~$\bfC_0$ the {\em underlying category} of~$\bfC$.

There is another way to pass from a simplicial category~$\bfC$ to an ordinary category: the {\em homotopy category}~$\Ho\bfC$. It has the same set of objects, but the set of morphisms~$x\to y$ in~$\Ho\bfC$ is the set~$\pi_0\bfC(x,y)$ of components of the mapping space~$\bfC(x,y)$.


\subsection*{Simplicial centers}

We can now introduce a strict notion of center for simplicial categories.

\begin{definition}\label{def:simplicial_center}
Let~$\bfC$ be a simplicial category. The {\em simplicial center}~$\rmZ(\bfC)$ of~$\bfC$ is the equalizer (in the category of spaces) of the two maps
\begin{equation}\label{eq:lim_map}
    \prod_x\bfC(x,x)\longrightarrow\prod_{y,z}\Map(\bfC(y,z),\bfC(y,z))
\end{equation}
that are given by sending a family~$\Phi=(\Phi_x)$ to the map~$f\mapsto f\Phi_y$ and the map~$f\mapsto \Phi_zf$, respectively, in the~$(y,z)$ component.
\end{definition}

For discrete categories~$\bfC$, one may replace the target with
\[
\prod_{f\colon y\to z}\bfC(y,z),
\]
but the alternative above is adapted to work in the simplicial context as well.

One can determine the set of~$n$-simplices of the simplicial center by direct inspection.

\begin{proposition}
For every simplicial category~$\bfC$, there are isomorphisms
\[
\rmZ(\bfC)_n\cong\rmZ(\bfC_n)
\]
that are natural in~$n$. 
\end{proposition}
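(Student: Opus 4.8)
The plan is to exploit the fact that limits of spaces, and in particular equalizers, are computed degreewise, so that the simplicial set~$\rmZ(\bfC)$ is the equalizer of the two maps of Definition~\ref{def:simplicial_center} evaluated in each simplicial degree. Concretely, I would first identify the degree~$n$ part of the source: since an~$n$-simplex of~$\bfC(x,x)$ is by definition a morphism~$x\to x$ in the category~$\bfC_n$, one has~$(\prod_x\bfC(x,x))_n=\prod_x\bfC_n(x,x)$, so that an~$n$-simplex of the source is precisely a family~$\Phi=(\Phi_x)$ of endomorphisms~$\Phi_x\colon x\to x$ in~$\bfC_n$, one for each object~$x$. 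This is exactly the type of datum underlying an element of the center~$\rmZ(\bfC_n)$ of the ordinary category~$\bfC_n$.

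Next I would unpack the two maps in degree~$n$ using the correspondence~$\Map(X,Y)_n=\Mor(\Delta^n\times X,Y)$ recorded at the start of the section together with the simplicial composition of~$\bfC$. Sending~$\Phi$ to~$f\mapsto f\Phi_y$ produces, in the~$(y,z)$-component, the~$n$-simplex of~$\Map(\bfC(y,z),\bfC(y,z))$ adjoint to right composition with~$\Phi_y$, while sending~$\Phi$ to~$f\mapsto\Phi_zf$ produces the~$n$-simplex adjoint to left composition with~$\Phi_z$. The degree~$n$ equalizer therefore consists of those families~$\Phi$ for which these two~$n$-simplices of~$\Map(\bfC(y,z),\bfC(y,z))$ coincide, for every pair of objects~$(y,z)$.

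The heart of the argument is then to recognise this coincidence as the strict naturality condition defining~$\rmZ(\bfC_n)$. Evaluating the two adjoint maps~$\Delta^n\times\bfC(y,z)\to\bfC(y,z)$ on a morphism~$f\colon y\to z$ of~$\bfC_n$, that is, on the pair consisting of the top simplex of~$\Delta^n$ and~$f$, yields the two sides~$f\Phi_y$ and~$\Phi_zf$ of the naturality square for~$\Phi$ viewed as a transformation~$\id_{\bfC_n}\Rightarrow\id_{\bfC_n}$. I would show that the equalizer condition for all pairs~$(y,z)$ is equivalent to the commutativity of all these squares, which is precisely the statement that~$\Phi$ lies in~$\rmZ(\bfC_n)$, giving the desired bijection in each degree. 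Naturality in~$n$ is then obtained by observing that both sides sit inside~$\prod_x\bfC_n(x,x)$ and that the bijection is the identity on underlying families, so it is compatible with the structure maps inherited from~$\prod_x\bfC(x,x)$.

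The step I expect to be the main obstacle is exactly the identification in the previous paragraph: one must check not merely that membership in~$\rmZ(\bfC_n)$ is necessary for the two~$n$-simplices of the mapping space to agree, but that it is sufficient, i.e. that the naturality squares in~$\bfC_n$ force the two maps out of~$\Delta^n\times\bfC(y,z)$ to coincide on all simplices for all~$(y,z)$. Pinning down this equivalence, and confirming along the way that the degreewise centers really do assemble into the equalizer with its inherited simplicial structure so that the isomorphism is natural in~$n$, is where the direct inspection must be carried out with care.
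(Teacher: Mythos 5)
You have set this proof up correctly, and you have put your finger on exactly the right pressure point --- but the step you postpone as ``the main obstacle'' is not merely delicate: it fails, so the proposal cannot be completed as written. Unwinding $\Map(X,Y)_n=\Mor(\Delta^n\times X,Y)$, the $n$-simplex of $\Map(\bfC(y,z),\bfC(y,z))$ attached to a family $\Phi\in\prod_x\bfC_n(x,x)$ by the first map sends an $m$-simplex $(\alpha,f)$, where $\alpha\colon[m]\to[n]$ is an operator in $\Delta$ and $f\in\bfC(y,z)_m$, to $f\cdot\alpha^*\Phi_y$; the second sends it to $\alpha^*\Phi_z\cdot f$. Equality of simplicial maps means equality on \emph{all} simplices, so membership of $\Phi$ in the degree-$n$ equalizer says that $\alpha^*\Phi$ is a central family in $\bfC_m$ for \emph{every} operator $\alpha\colon[m]\to[n]$, whereas $\Phi\in\rmZ(\bfC_n)$ is only the case $\alpha=\id$. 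Your sufficiency question can be settled affirmatively exactly for the injective $\alpha$: choose an order-preserving retraction $\rho$ with $\rho\alpha=\id$, lift $f$ to $F=\rho^*f$, and apply the functor $\alpha^*$ to the naturality square $F\Phi_y=\Phi_zF$. For degeneracy operators there is no such lift of $f$, and centrality is genuinely not preserved by them.

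A counterexample: let $M$ be the one-object simplicial category (i.e.\ simplicial group) with $M_0=\bbZ/2=\{1,a\}$, $M_1=\bbZ/2\times\Sigma_3$ (where $\Sigma_3$ is the symmetric group on three letters), $d_0=d_1=\pr\colon M_1\to M_0$ the projection, and $s_0(a)=(a,\tau)$ for a fixed transposition $\tau$; all structure maps are homomorphisms, the identities $d_0s_0=\id=d_1s_0$ hold, and one extends to a full simplicial group by applying the $1$-coskeleton. Then $a$ lies in $\rmZ(M_0)=\bbZ/2$, but $s_0a$ is not central in $M_1$: taking $f=(1,\rho)$ with $\rho$ a $3$-cycle gives $f\cdot s_0a=(a,\rho\tau)\neq(a,\tau\rho)=s_0a\cdot f$. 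So the vertex $a$ violates the equalizer condition already in simplicial degree one, and $\rmZ(M)_0=\{1\}\subsetneq\rmZ(M_0)$. The same example shows that $n\mapsto\rmZ(\bfC_n)$ need not even be a simplicial set (degeneracies need not preserve centrality), so the asserted naturality in $n$ breaks down as well. What your inspection does prove is a natural injection $\rmZ(\bfC)_n\hookrightarrow\rmZ(\bfC_n)$, whose image consists of the families $\Phi$ such that $\alpha^*\Phi$ remains central in $\bfC_m$ for every $\alpha\colon[m]\to[n]$. You should not be troubled that you could not close the gap by comparing with the paper: the paper states this proposition with no proof beyond the phrase ``direct inspection'', and the statement itself requires the above correction. (The corollaries the paper draws from it are unaffected: the corrected $\rmZ(\bfC)$ is still a simplicial submonoid of $\prod_x\bfC(x,x)$ whose $n$-simplices are in particular central in $\bfC_n$ and hence commute with one another, so it is still a simplicial abelian monoid and therefore a product of Eilenberg-Mac Lane spaces.)
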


In particular, the elements of the center~$\rmZ(\bfC_0)$ of the underlying category~$\bfC_0$ appear as the vertices of the simplicial center~$\rmZ(\bfC)$ of~$\bfC$.

The simplicial center of a simplicial category is a simplicial monoid: a simplicial submonoid of the product of the endomorphism monoids~$\bfC(x,x)$, by Definition~\ref{def:simplicial_center}.

\begin{corollary}\label{cor:simplicial_center_is_abelian _monoid}
For every simplicial category~$\bfC$, the simplicial center~$\rmZ(\bfC)$ is a simplicial abelian monoid. 
\end{corollary}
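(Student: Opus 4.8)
The plan is to reduce the statement to a levelwise assertion and then invoke the classical Eckmann--Hilton fact for ordinary categories. A simplicial abelian monoid is nothing but a simplicial monoid whose multiplication is commutative in each simplicial degree, commutativity being a condition that is automatically respected by the face and degeneracy operators, since these are monoid homomorphisms. Having already recorded that $\rmZ(\bfC)$ is a simplicial monoid, it therefore suffices to show that each monoid $\rmZ(\bfC)_n$ is abelian.

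By the preceding Proposition there are isomorphisms $\rmZ(\bfC)_n\cong\rmZ(\bfC_n)$ that are natural in~$n$, and these are in fact isomorphisms of monoids, because the multiplication on $\rmZ(\bfC)$ is defined degreewise by the composition in the underlying ordinary categories $\bfC_n$. Consequently I am reduced to proving that the center $\rmZ(\bfD)$ of an arbitrary small ordinary category $\bfD$ is abelian, applied to $\bfD=\bfC_n$.

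This last point is the Eckmann--Hilton observation already recalled in the introduction. An element of $\rmZ(\bfD)$ is a family $\Phi=(\Phi_x\colon x\to x)$ that is natural in~$x$, meaning that $\Phi_y f=f\Phi_x$ for every arrow $f\colon x\to y$. Given two such families $\Phi$ and $\Psi$, I would apply the naturality of $\Phi$ to the arrow $\Psi_x\colon x\to x$ itself, which is a legitimate endomorphism of~$x$ in~$\bfD$. Naturality then reads $\Phi_x\Psi_x=\Psi_x\Phi_x$ for every object~$x$, which is precisely the statement that the two composites of $\Phi$ and $\Psi$ in the monoid $\rmZ(\bfD)$ coincide. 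Hence $\rmZ(\bfD)$ is abelian.

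I do not anticipate any serious obstacle, as the content is entirely formal. The only point demanding a moment's care is the compatibility of the degreewise commutativity with the simplicial structure, but this is immediate once one observes that commutativity is preserved under any monoid homomorphism, in particular under all the face and degeneracy maps, so that the levelwise abelian monoids indeed assemble into a simplicial abelian monoid.
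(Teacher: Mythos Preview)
Your proof is correct and follows essentially the same route as the paper: reduce to the levelwise statement via the preceding Proposition and then invoke the Eckmann--Hilton observation that the center of an ordinary category is abelian. You simply spell out in more detail what the paper leaves as a reference and an exercise, including the explicit naturality computation $\Phi_x\Psi_x=\Psi_x\Phi_x$.
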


\begin{proof}
As has been mentioned in the introduction, by Eckmann and Hilton, the centers~$\rmZ(\bfC_n)$ of the discrete categories~$\bfC_n$ are abelian. (This is an easy exercise. One way to see it is in~\cite[XI.2.4]{Kassel}.) Now the corollary follows from the preceding proposition.
\end{proof}

The following result states that the homotopy types of simplicial centers are as easy as possible.

\begin{corollary}
For every simplicial category~$\bfC$, the simplicial center~$\rmZ(\bfC)$ is a product of Eilenberg-Mac Lane spaces. 
\end{corollary}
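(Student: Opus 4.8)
The plan is to combine the preceding corollary with the classical structure theory of simplicial abelian groups. First recall why that theory settles the case of \emph{groups}: if $A$ is a simplicial abelian group, then under the Dold--Kan correspondence it is determined by its normalized chain complex $NA$, a non-negatively graded complex of abelian groups with $H_n(NA)\cong\pi_n A$. Since $\bbZ$ has global dimension one, every such complex is formal, that is, quasi-isomorphic to the direct sum $\bigoplus_n\pi_n A\,[n]$ of its homology groups placed in their degrees. Transporting this splitting back through Dold--Kan, and using that a sum of complexes concentrated in distinct degrees corresponds to a product of the associated simplicial abelian groups, produces a weak equivalence $A\simeq\prod_n K(\pi_n A,n)$. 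This expresses the absence of $k$-invariants, and it is the homotopical input I want to feed the center into.

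The real work lies in bridging from the monoid $\rmZ(\bfC)$, which the preceding corollary provides, to this statement about groups. One cannot apply Dold--Kan directly, because $\rmZ(\bfC)$ need not be grouplike: the Frobenius example from the introduction already has $\pi_0\rmZ(\bfC)\cong\mathbb{N}$, which is only a monoid. I would therefore argue componentwise. The component of the identity is a \emph{connected} simplicial abelian monoid; being connected it is grouplike, and for grouplike commutative monoids the group completion map $M\to M^{\mathrm{gp}}$ is a weak equivalence onto a simplicial abelian group, to which the first paragraph applies. The set of components $\pi_0\rmZ(\bfC)$ then enters only as the discrete degree-zero factor $K(\pi_0\rmZ(\bfC),0)$, which the Frobenius example shows must genuinely be allowed to be a monoid rather than a group; this is why $K(\pi_0,0)$ appears in degree zero and not an abelian group.

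The main obstacle is exactly this passage through the monoid structure. I must control \emph{all} path components and check that each is again a product of Eilenberg--Mac Lane spaces, for translation by a non-invertible central element (again Frobenius) need not be a homotopy equivalence, so the other components are not identified with the unit component a priori. Making the group-completion step rigorous is the delicate point: I would invoke the group completion theorem together with the observation that simplicial abelian monoids are simple spaces, so that a homology equivalence suffices, and then organize the components compatibly with the degree-zero monoid. Once this is in place the Dold--Kan and hereditary-splitting input is purely formal, and I would finish by recording that all of these equivalences can be chosen naturally in $\bfC$, as the statement implicitly requires.
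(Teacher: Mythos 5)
The paper does not take your route: its proof consists of applying Moore's theorem on the homotopy types of simplicial abelian \emph{monoids}~\cite{Moore} directly to the simplicial abelian monoid~$\rmZ(\bfC)$ provided by the preceding corollary. You instead try to re-derive that monoid statement from the case of simplicial abelian groups, and while your treatment of the identity component is essentially correct (connected implies grouplike, group completion is then a homology equivalence of simple spaces by Quillen's group completion theorem, and Dold--Kan together with the hereditary splitting finishes), the bridge from grouplike monoids to arbitrary ones---the step you yourself call the delicate point---is a genuine gap, and it is exactly the content that the citation of Moore is carrying in the paper's one-line proof.

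The tools you name cannot close that gap. The group completion theorem for a commutative simplicial monoid~$M$ identifies~$H_*(M^{\mathrm{gp}})$ with the localization~$H_*(M)[\pi_0(M)^{-1}]$; when~$\pi_0(M)$ is not already a group, this localization changes~$\pi_0$ and the components, so it describes~$M^{\mathrm{gp}}$ and gives no control over the components of~$M$ itself. Moreover, the componentwise statement you say you must verify is false in this generality: for the free (unreduced) simplicial abelian monoid on a simplicial model of~$\rmS^2$, namely~$M=\coprod_{n\geqslant0}\mathrm{SP}^n(\rmS^2)$, the components are~$\mathrm{SP}^n(\rmS^2)\cong\mathbb{C}\mathrm{P}^n$: they are pairwise non-equivalent, and for~$1\leqslant n<\infty$ they are not products of Eilenberg-Mac Lane spaces (already~$n=1$ gives~$\rmS^2$), while~$M^{\mathrm{gp}}\simeq\bbZ\times\rmK(\bbZ,2)$. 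Since~$M$ is abelian, it is even realized as the simplicial center of the one-object simplicial category with endomorphism space~$M$, so the difficulty with non-invertible components cannot be ``organized away'' by any argument of the proposed shape; it has to be absorbed by whatever hypotheses and content Moore's theorem actually carries, which your proposal does not supply. (The closing naturality claim should also be dropped: the statement does not require it, and even for simplicial abelian groups the Eilenberg-Mac Lane splitting cannot be chosen naturally.)
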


\begin{proof}
This follows immediately from the preceding corollary and Moore's theorem about the homotopy types of simplicial abelian monoids, see \cite{Moore}.
\end{proof}

We remark that centers in other enriched contexts have been studied before, see~\cite{Lindner}.


\section{Homotopy coherent centers}\label{sec:homotopy_coherent}

The simplicial center of a simplicial category has been defined, in the preceding Section~\ref{sec:simplicial}, as a certain limit, see also Remark~\ref{rem:simplicial_center_as_limit} below. Now we will define the homotopy coherent center of a simplicial category as the corresponding homotopy limit. We will start with a brief review of the cosimplicial replacement of the homotopy end construction in this particular case, referring to \cite{Cordier+Porter:TAMS} for more general information. See also~\cite{Weibel} for the case when the source category is discrete.

\subsection*{Cosimplicial replacements}

Let~$\bfC$ be a simplicial category. For any integer~$n\geqslant0$ we can consider the space
\[
\Pi^n\bfC=\prod_{x_0,\dots,x_n}
\Map( \bfC(x_1,x_0)\times\dots\times\bfC(x_n,x_{n-1}) , \bfC(x_n,x_0) )
\]
where the product runs over the~$(n+1)$-tuples of objects of~$\bfC$. If~$\Phi$ is a vertex in~$\Pi^n\bfC$, then it can be evaluated on~$n$-tuples~$(f_1,\dots,f_n)$ of composable arrows
\[
x_0\overset{f_1}{\longleftarrow}
x_1\overset{f_2}{\longleftarrow}
x_2\longleftarrow
\dots\longleftarrow
x_{n-1}\overset{f_n}{\longleftarrow}
x_n
\]
to give~$\Phi(f_1,\dots,f_n)\in\bfC(x_n,x_0)$. Together with the evident structure maps, this defines a cosimplicial space~$\Pi^\bullet\bfC$: The coface maps~$\Pi^{n-1}\bfC\to\Pi^n\bfC$ are given by
\[
(d^k\Phi)(f_1,\dots,f_n)=
\begin{cases}
f_1\Phi(f_2,\dots,f_n) & k=0\\
\Phi(f_1,\dots,f_kf_{k+1},\dots,f_n) & 0<k<n\\
\Phi(f_1,\dots,f_{n-1})f_n & k=n
\end{cases}
\]
for~$k=0,\dots,n$, and the codegeneracy maps~$\Pi^{n+1}\bfC\to\Pi^n\bfC$ are given by
\[
(s^k\Phi)(f_1,\dots,f_n)=\Phi(f_1,\dots,f_k,\id,f_{k+1},\dots,f_n)
\]
for~$k=0,\dots,n$.

\begin{remark}\label{rem:simplicial_center_as_limit}
For~$n=0$ and~$n=1$, we obtain the source and target of~\eqref{eq:lim_map}, and we recover the definition of the simplicial center~$\rmZ(\bfC)$ as the equalizer of the two coboundaries~$\Pi^0\bfC\to\Pi^1\bfC$. This equalizer is the limit 
\[
\lim\Pi^\bullet\bfC=c\bfS(*,\Pi^\bullet\bfC)
\]
of the cosimplicial space~$\Pi^\bullet\bfC$. 
\end{remark}

We note that the cosimplicial space~$\Pi^\bullet\bfC$ is canonically pointed by the families of composition maps (interpreted as identities in low dimensions).

\begin{lemma}\label{lem:fibrant}
If~$\bfC$ is a simplicial category that is locally Kan, then the cosimplicial space~$\Pi^\bullet\bfC$ is fibrant.
\end{lemma}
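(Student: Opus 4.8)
The plan is to unwind the definition of \emph{fibrant} for cosimplicial spaces in the Bousfield--Kan sense (\cite{Bousfield+Kan}): a cosimplicial space $X^\bullet$ is fibrant precisely when, for every $n\geqslant0$, the $n$-th matching map
\[
X^n\longrightarrow M^nX^\bullet
\]
to the matching space $M^nX^\bullet=\lim(s^0,\dots,s^{n-1})$ is a Kan fibration. So I would verify this condition for $X^\bullet=\Pi^\bullet\bfC$. For $n=0$ the matching space is a point, and the condition reduces to the assertion that $\Pi^0\bfC=\prod_x\bfC(x,x)$ is a Kan complex; this is immediate, since each $\bfC(x,x)$ is Kan by the locally Kan hypothesis and a product of Kan complexes is Kan.

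Next I would make the matching map explicit. The codegeneracy $s^k\colon\Pi^n\bfC\to\Pi^{n-1}\bfC$ restricts a family $\Phi$ to those tuples of composable arrows in which slot $k{+}1$ is occupied by an identity, so collectively the $s^k\Phi$ record exactly the restriction of $\Phi$ to the \emph{degenerate} tuples---those $(f_1,\dots,f_n)$ in which at least one $f_i$ is an identity, which forces $x_{i-1}=x_i$ for that index. The compatibility relations defining the limit $M^n\Pi^\bullet\bfC$ say precisely that these restrictions agree on the overlaps where two arrows are identities. Reorganizing the product over object tuples, I would therefore identify, for each tuple $(x_0,\dots,x_n)$, the inclusion
\[
A(x_0,\dots,x_n)\hookrightarrow\bfC(x_1,x_0)\times\dots\times\bfC(x_n,x_{n-1})=:P
\]
of the simplicial subset $A$ of degenerate tuples (a union, over the indices $i$ with $x_{i-1}=x_i$, of the coordinate subspaces on which the $i$-th factor is pinned to $\id_{x_i}$), and present the matching map as the product over all object tuples of the restriction maps $\Map(P,\bfC(x_n,x_0))\to\Map(A,\bfC(x_n,x_0))$.

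With this identification in hand the result is formal. Each inclusion $A\hookrightarrow P$ is a monomorphism, hence a cofibration of simplicial sets, and the target $\bfC(x_n,x_0)$ is a Kan complex by the locally Kan hypothesis. Thus the axiom SM7 (equivalently the pushout--product property, applied to the fibration $\bfC(x_n,x_0)\to*$) shows that each restriction map $\Map(P,\bfC(x_n,x_0))\to\Map(A,\bfC(x_n,x_0))$ is a Kan fibration. Since a product of Kan fibrations is again a Kan fibration, the matching map is a Kan fibration, and so $\Pi^\bullet\bfC$ is fibrant.

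The main obstacle I anticipate is bookkeeping rather than conceptual: verifying that the matching space and matching map really do decompose over object tuples into the restriction maps above, since a priori the individual codegeneracies mix the components indexed by different tuples (a codegeneracy relates the component at a tuple with two equal consecutive objects to the component at the contracted tuple). Checking $n=0,1$ by hand makes the pattern transparent, and the statement is an instance of the general principle---see \cite{Cordier+Porter:TAMS}---that the cosimplicial replacement computing a homotopy end is fibrant as soon as the diagram takes fibrant values, here guaranteed by local Kan-ness. Once the matching map has been identified, the SM7 argument is the only genuine input.
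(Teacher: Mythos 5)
Your proof is correct and takes essentially the same route as the paper's: the paper likewise identifies the matching maps as induced by the codegeneracies, which are (projections and) evaluations at identities, i.e.\ restrictions along the injective --- hence cofibration --- insertions of identity factors, and concludes by mapping these cofibrations into the Kan mapping spaces. Your write-up merely makes explicit the bookkeeping the paper leaves implicit, namely the decomposition of the matching map over object tuples into the restriction maps $\Map(P,\bfC(x_n,x_0))\to\Map(A,\bfC(x_n,x_0))$ followed by SM7.
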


\begin{proof}
This can be checked by hand: It has to be verified that certain maps
\begin{equation}\label{eq:fibrant}
\Pi^n\bfC\longrightarrow M^{n-1}\Pi^\bullet\bfC,
\end{equation}
for all~$n\geqslant0$, are fibrations. Here, the target~$M^{n-1}\Pi^\bullet\bfC\subseteq(\Pi^{n-1}\bfC)^n$ is the matching space, the subspace defined by the~$n$-tuples with consistent codegeneracies, and the map~\eqref{eq:fibrant} is also given by codegeneracies. The implies the result, because the codegeneracies are given by (projections and) evaluations on identities. The latter, in turn, are induced by insertions of identity factors, which are injective, hence cofibrations.
\end{proof}


\subsection*{Homotopy coherent centers}

We now come to the definition that is basic to the rest of this text.

\begin{definition}
Let~$\bfC$ be a simplicial category. The {\em homotopy coherent center}~$\calZ(\bfC)$ of~$\bfC$ is defined as the homotopy limit
\[
\calZ(\bfC)=\holim\Pi^\bullet\bfC=c\bfS(\Delta^\bullet,\Pi^\bullet\bfC)=\Tot(\Pi^\bullet\bfC)
\]
of the cosimplicial space~$\Pi^\bullet\bfC$.
\end{definition}

A vertex in the homotopy coherent center~$\calZ(\bfC)$ is (adjoint to) a family~$\Phi=(\Phi^n\,|\,n\geqslant0)$ of maps
\[
\Phi^n\colon\Delta^n\times\bfC(x_1,x_0)\times\dots\times\bfC(x_n,x_{n-1})
\longrightarrow\bfC(x_n,x_0)
\]
indexed by all choices of~$n$ and~$x_0,\dots,x_n$ that are consistent with the cosimplicial structure maps. If we choose~$n=0$, then we obtain a family of morphisms~$\Phi^0_x\colon x\to x$, and if we choose~$n=1$, then we obtain homotopies~$\Delta^1\times\bfC(y,z)\to\bfC(y,z)$ between the maps~\hbox{$f\mapsto\Phi^0_zf$} and~$f\mapsto f\Phi^0_y$. In particular, these give homotopies~\hbox{$\Phi^0_zf\simeq f\Phi^0_y$} that show that the homotopy classes of the~$\Phi^0_x$ define an element in the center of the homotopy category. For~$n\geqslant2$, the~$\Phi^n$ contain higher coherence information.

It follows from Lemma~\ref{lem:fibrant} that~$\calZ(\bfC)$ is a Kan complex if the simplicial category~$\bfC$ is locally Kan. See also~\cite[Proposition~2.1]{Cordier+Porter:TAMS} for a direct proof of a more general statement.

\begin{remark}\label{rem:nat}
The center of an ordinary category is the monoid of all natural transformations from the identity to itself, and we have here presented a homotopy coherent generalization of this idea that is adapted to the context of simplicial categories. For later purposes, it will be useful to know that there are similar models for spaces of coherent natural transformations between simplicial functors~$F,G\colon\bfC\to\bfD$ as well: One builds a cosimplicial space~$\Pi^\bullet(F,G)$ with
\[
\Pi^n(F,G)=\prod_{x_0,\dots,x_n}
\Map( \bfC(x_1,x_0)\times\dots\times\bfC(x_n,x_{n-1}) , \bfD(Fx_n,Gx_0) )
\]
in dimension~$n$. Then~$\calN(F,G)=\Tot\Pi^n(F,G)$ may be considered as the space of all coherent natural transformations~$F\to G$. Taking~$F=\id_\bfC=G$, we recover the homotopy coherent center as~$\calZ(\bfC)=\calN(\id_C,\id_C)$. See~\cite{Cordier+Porter:Equivariant} and~\cite{Cordier+Porter:TAMS} for this and generalizations.
\end{remark}


\section{Multiplicative structure}\label{sec:multiplicative}

We will now address the algebraic structure on the homotopy coherent centers that reflects the `composition' of coherent natural transformations.

\subsection*{An~$A_\infty$ multiplication}

We have seen in Corollary~\ref{cor:simplicial_center_is_abelian _monoid} that the simplicial center~$\rmZ(\bfC)$ of a simplicial category~$\bfC$ is a simplicial abelian monoid. I know of no reason why the homotopy coherent center~$\calZ(\bfC)$ should have a canonical simplicial monoid structure as well. However it does come with a homotopy commutative~$A_\infty$ structure, and that is just as good for the purposes of homotopy theory. To see this, we may use the criteria for operad actions on totalizations of cosimplicial objects as presented in~\cite{McClure+Smith:Deligne},~\cite{McClure+Smith:survey}, and~\cite{McClure+Smith:AJM}. 

Accordingly, we only need to find strictly associative and unital pairings
\[
m=m_{p,q}\colon\Pi^p\bfC\times\Pi^p\bfC\longrightarrow\Pi^{p+q}\bfC
\]
that satisfy
\begin{align*}
d^km(\Phi,\Psi)&=
\begin{cases}
m(d^k\Phi,\Psi)&k\leqslant p\\
m(\Phi,d^{k-p}\Psi)&k>p\\
\end{cases}\\
m(d^{p+1}\Phi,\Psi)&=m(\Phi,d^0\Psi)\\
s^km(\Phi,\Psi)&=
\begin{cases}
m(s^k\Phi,\Psi)&k<p\\
m(\Phi,s^{k-p}\Psi)&k\geqslant p\\
\end{cases}
\end{align*}
in order to deduce that~$\calZ(\bfC)=\Tot(\Pi^\bullet\bfC)$ is an~$A_\infty$ monoid~\cite[Section~3]{McClure+Smith:AJM}. The reader may check that
\[
m_{p,q}(\Phi,\Psi)(f_1,\dots,f_p,f_{p+1},\dots,f_{p+q})
=\Phi(f_1,\dots,f_p)\Psi(f_{p+1},\dots,f_{p+q})
\]
satisfies these conditions in our case.

\subsection*{Homotopy commutativity}

As suggested by the observation of Eckmann and Hilton, we will now proceed to see that the homotopy coherent center not only has an~\hbox{$A_\infty=E_1$} structure, but in fact is automatically commutative up to homotopy in the sense that this structure can be enhanced to an~$E_2$ structure.

\begin{theorem}\label{thm:E2}
 For every simplicial monoid~$\bfC$, the homotopy coherent center~$\calZ(\bfC)$ comes with an~$E_2$-multiplication; it is a homotopy commutative~$A_\infty$ monoid.
\end{theorem}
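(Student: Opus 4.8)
The plan is to observe that the pairings $m_{p,q}$ already exhibited make $\Pi^\bullet\bfC$ into a \emph{cosimplicial object with multiplication}, and then to feed this single piece of data into the full strength of the McClure--Smith machinery. The $A_\infty$-structure recorded just above uses only that such a multiplication induces an $E_1$-action on the totalization; the substance of the present theorem is that the very same data produce an action of an $E_2$-operad. This is precisely the output of their solution of the Deligne conjecture~\cite{McClure+Smith:Deligne},~\cite{McClure+Smith:AJM}. So no new structure needs to be built: I would simply invoke the stronger conclusion for the cosimplicial space $\Pi^\bullet\bfC$, rather than the weaker $A_\infty$ conclusion used in the previous subsection.

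For completeness I would re-record that the $m_{p,q}\colon\Pi^p\bfC\times\Pi^q\bfC\to\Pi^{p+q}\bfC$ satisfy the hypotheses of that recognition principle. Strict associativity holds because $m_{p+q,r}(m_{p,q}(\Phi,\Psi),\Xi)$ and $m_{p,q+r}(\Phi,m_{q,r}(\Psi,\Xi))$ both send a tuple $(f_1,\dots,f_{p+q+r})$ to the common threefold composite $\Phi(f_1,\dots,f_p)\,\Psi(f_{p+1},\dots,f_{p+q})\,\Xi(f_{p+q+1},\dots,f_{p+q+r})$, so associativity descends from associativity of composition in $\bfC$. The two-sided unit is the canonical basepoint of $\Pi^0\bfC=\prod_x\bfC(x,x)$ given by the family of identities, since multiplying on either side by it inserts an $\id$ into the composite and changes nothing; and the coface relation $m(d^{p+1}\Phi,\Psi)=m(\Phi,d^0\Psi)$ together with the codegeneracy relations listed above follows again from associativity and from the fact that the codegeneracies insert identity factors. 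These are exactly the verifications left to the reader in the $A_\infty$ discussion. Granting them, the theorem endows $\calZ(\bfC)=\Tot(\Pi^\bullet\bfC)$ with an $E_2$-action refining the $A_\infty$-action. Homotopy commutativity is then immediate, because the binary part of an $E_2$-operad is connected (the little $2$-cubes model has $\mathcal{C}_2(2)\simeq S^1$), so the two orders of multiplication lie in one path component and are joined by a coherent homotopy; this is the promised analogue of Eckmann--Hilton.

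The delicate point is not a computation but the passage from the additive to the unstable setting. McClure and Smith's original statements are framed for cochain complexes, with the Hochschild complex as the motivating example, whereas $\Pi^\bullet\bfC$ is a genuinely non-additive cosimplicial space. I would therefore be explicit that it is the topological/simplicial form of their recognition theorem~\cite{McClure+Smith:AJM} (as surveyed in~\cite{McClure+Smith:survey}) that is in play, and I would check that the fibrancy of $\Pi^\bullet\bfC$ established in Lemma~\ref{lem:fibrant} --- available because $\bfC$ is locally Kan --- supplies exactly the Reedy-fibrancy hypothesis under which $\Tot$ computes the homotopy limit and the operad action descends to $\calZ(\bfC)$.
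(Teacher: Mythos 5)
There is a genuine gap, and it sits at the heart of your argument: you assert that ``no new structure needs to be built'' and that the pairings $m_{p,q}$ --- the monoidal structure on $\Pi^\bullet\bfC$ that yields the $A_\infty$-action --- can themselves be fed into the Deligne-conjecture machinery to output an $E_2$-action. That is not what McClure and Smith prove. Their $A_\infty$-theorem (\cite[Section~3]{McClure+Smith:AJM}) takes a monoidal cosimplicial object as input; their $E_2$-theorem (\cite[Section~10]{McClure+Smith:AJM}, and \cite{McClure+Smith:Deligne} in the chain-level case) takes as input a \emph{nonsymmetric operad with multiplication}: a sequence of spaces $\mathcal{O}(n)$ with insertion maps $\circ_i\colon\mathcal{O}(n)\times\mathcal{O}(j)\to\mathcal{O}(n+j-1)$ together with vertices $\mu\in\mathcal{O}(2)$ and $\epsilon\in\mathcal{O}(0)$ satisfying $\mu(\mu,\id)=\mu(\id,\mu)$ and $\mu(\epsilon,\id)=\id=\mu(\id,\epsilon)$. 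The insertions are genuinely extra data, not recoverable from the cup-product pairings: they encode substituting the output of $\Psi$ into one of the \emph{inputs} of $\Phi$, whereas $m_{p,q}$ only places outputs side by side. If a monoidal structure alone implied $E_2$, every differential graded algebra would be homotopy commutative and Deligne's conjecture would be vacuous; the whole difficulty of that conjecture is precisely that the cup product on Hochschild cochains must be supplemented by the brace/insertion operations.

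So the actual content of the proof, which your proposal omits, is the construction of this operad structure on $\Pi^\bullet\bfC$: one defines
\[
(\Phi\circ_i\Psi)(f_1,\dots,f_{n+j-1})=
\Phi(f_1,\dots,f_{i-1},\Psi(f_i,\dots,f_{i+j-1}),f_{i+j},\dots,f_{n+j-1}),
\]
takes $\epsilon$ to be the family of identities $(\id_x)$ and $\mu$ to be the family of composition maps $\bfC(x_1,x_0)\times\bfC(x_2,x_1)\to\bfC(x_2,x_0)$, and verifies the operad axioms together with the two multiplication identities; this data then \emph{induces} the cosimplicial structure and the pairings $m_{p,q}$, not the other way around. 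Your verification of associativity and unitality of the $m_{p,q}$, and your remark that homotopy commutativity follows from connectivity of the binary space of an $E_2$-operad, are correct but do not close this gap. A further small inaccuracy: fibrancy of $\Pi^\bullet\bfC$ (Lemma~\ref{lem:fibrant}) is not what licenses the operad action on $\Tot(\Pi^\bullet\bfC)$ --- the McClure--Smith action is constructed on $\Tot$ without any fibrancy hypothesis, which is why the paper's proof never invokes local Kan-ness; fibrancy is relevant only to $\Tot$ having the homotopy type of the homotopy limit.
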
  

\begin{proof}
Again, we use the work of McClure and Smith cited above. Accordingly, we need to turn~$\Pi^\bullet\bfC$ into a (nonsymmetric) operad with an associative multiplication with unit~\cite[Section~10]{McClure+Smith:AJM}.

For the operad structure, it suffices to give the insertion maps
\[
\circ_i\colon\Pi^n\bfC\times\Pi^j\bfC\longrightarrow\Pi^{n+j-1}\bfC,
\]
and we do so by defining
\[
(\Phi\circ_i\Psi)(f_1,\dots,f_{n+j-1})=
\Phi(f_1,\dots,f_{i-1},\Psi(f_i,\dots,f_{i+j-1}),f_{i+j},\dots,f_{n+j-1}).
\]
The additional structure of an associative multiplication with unit boils down to a pair of vertices~$\mu\in\Pi^2\bfC$ and~\hbox{$\epsilon\in\Pi^0\bfC$} such that the two conditions~$\mu(\mu,\id)=\mu(\id,\mu)$ and~\hbox{$\mu(\epsilon,\id)=\id=\mu(\id,\epsilon)$} are satisfied. In our case, we can define~$\epsilon$ to be the family~$(\id_x)$ of identity morphisms~$\id_x\colon x\to x$, and~$\mu$ to be the family of composition maps~\hbox{$\bfC(x_1,x_0)\times\bfC(x_2,x_1)\to\bfC(x_2,x_0)$} that send~$(f_1,f_2)$ to~$f_1f_2$.

Again, it is straightforward to verify that these maps satisfy the required conditions. 
\end{proof}

\begin{corollary}
For every simplicial monoid~$\bfC$, the set~$\pi_0\calZ(\bfC)$ of components of the homotopy coherent center~$\calZ(\bfC)$ is a commutative monoid.
\end{corollary}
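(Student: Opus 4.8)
The plan is to derive this formally from Theorem~\ref{thm:E2}, which already equips $\calZ(\bfC)=\Tot(\Pi^\bullet\bfC)$ with the structure of a homotopy commutative $A_\infty$ monoid. The whole point is that the functor $\pi_0$ collapses all of the higher homotopical coherence into honest algebraic identities, so that the monoid axioms, which hold only up to (coherent) homotopy on $\calZ(\bfC)$, hold strictly on $\pi_0\calZ(\bfC)$.

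First I would fix the binary operation. The $E_2$-action produced in the proof of Theorem~\ref{thm:E2} has an underlying multiplication map $\mu\colon\calZ(\bfC)\times\calZ(\bfC)\to\calZ(\bfC)$, coming from the distinguished element $\mu\in\Pi^2\bfC$ given by composition, together with the unit $\epsilon\in\Pi^0\bfC$ given by the identities $\id_x$. Since $\pi_0$ preserves finite products, one obtains an induced operation
\[
\pi_0\calZ(\bfC)\times\pi_0\calZ(\bfC)\cong\pi_0\bigl(\calZ(\bfC)\times\calZ(\bfC)\bigr)\xrightarrow{\ \pi_0\mu\ }\pi_0\calZ(\bfC),
\]
and the component of the basepoint $\epsilon$ serves as a candidate two-sided unit.

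Next I would check that this operation is associative and unital on the nose. Because $\pi_0$ sends homotopic maps to the same map of sets, the associativity relation $\mu(\mu,\id)\simeq\mu(\id,\mu)$ and the unit relations $\mu(\epsilon,\id)\simeq\id\simeq\mu(\id,\epsilon)$, which hold up to homotopy as part of the $A_\infty=E_1$ structure, become strict equalities after applying $\pi_0$. Thus $\pi_0\calZ(\bfC)$ is a monoid. Commutativity then comes from the $E_2$ refinement: homotopy commutativity means that $\mu$ and its transpose $\mu\circ\tau$, with $\tau$ the swap of the two factors, are joined by a homotopy, since the corresponding two points of the space of binary operations lie in the same path component. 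Applying $\pi_0$ turns this homotopy into the identity $[a][b]=[b][a]$, which finishes the argument.

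I expect the only point requiring genuine care to be the passage from the $E_2$-structure to homotopy commutativity, that is, confirming that the two orderings of $\mu$ are connected by a path in the relevant space of binary operations so that $\mu\simeq\mu\circ\tau$; everything else is the purely formal observation that $\pi_0$ preserves products and destroys higher coherence.
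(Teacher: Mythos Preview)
Your proposal is correct and matches the paper's intent: the corollary is stated there without proof, as an immediate consequence of Theorem~\ref{thm:E2}, and what you have written is precisely the standard unpacking of why $\pi_0$ of an $E_2$-monoid is a commutative monoid. The only minor imprecision is the conflation of the element $\mu\in\Pi^2\bfC$ with the induced binary operation on the totalization, but this is harmless since the McClure--Smith machinery does produce the latter from the former.
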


See~\cite{Kock+Toen} for another non-linear version of Deligne's conjecture in a different setting.


\section{Functoriality and equivalences}\label{sec:equivalences}

In general, a simplicial functor~\hbox{$F\colon\bfC\to\bfD$} between simplicial categories does not induce a morphisms between the centers, in neither direction. This is already clear for discrete monoids. However, centers do allow for some functoriality, and this will be discussed in the present section. The main result will be the invariance of the homotopy coherent center under weak equivalences of simplicial categories as defined by Dwyer and Kan in~\cite[2.4]{Dwyer+Kan:function_complexes_in_homotopical_algebra}. Before we do so, let us discuss a different situation that also arises sufficiently often.


\subsection*{Quotients}

There is always a morphism
\[
\calZ(\bfC)\to\rmZ(\Ho\bfC),
\]
defined by sending a coherent family~$\Phi=(\Phi^n)$ to the homotopy class~$[\Phi^0]$. (One uses the homotopies~$\Phi^1$ to show that this is well-defined.) The target of the morphism is discrete, and this map clearly factors to give the second map
\[
\pi_0\calZ(\bfC)\longrightarrow\rmZ(\Ho\bfC)
\]
displayed in the introduction.

More generally, using functorial Postnikov approximations for spaces, which are simplicial functors~$\rmP_n\colon\bfS\to\bfS$ that preserve products, one may also consider the Postnikov approximations~\hbox{$\bfC\to\rmP_n\bfC$} of the simplicial categories~$\bfC$. The homotopy category is the special case~$n=0$, i.e.~$\rmP_0\bfC=\Ho\bfC$.

We obtain maps~\hbox{$\calZ(\bfC)\to\calZ(\rmP_n\bfC)$} and more generally
\[
\calZ(\rmP_m\bfC)\longrightarrow\calZ(\rmP_n\bfC)
\]
for all~$m\geqslant n$. We will later see that~$\calZ(\rmP_n\bfC)$ is~$n$-truncated, but it will not be the~$n$-truncation of~$\calZ(\bfC)$ in general. For example, in the case~$n=0$, the purpose of the obstruction theory that will be developed in Section~\ref{sec:otss} is to explain the (potential) failure of the map~\hbox{$\pi_0\calZ(\bfC)\to\rmZ(\Ho\bfC)$} to be a bijection.


\subsection*{Equivalences}

We recall from~\cite[2.4]{Dwyer+Kan:function_complexes_in_homotopical_algebra} that a simplicial functor~$F\colon\bfC\to\bfD$ between simplicial categories is called a {\it weak equivalence} if the following two conditions are satisfied: First, the induced functor~\hbox{$\Ho F\colon\Ho\bfC\to\Ho\bfD$} between the homotopy categories has to be an equivalence of (ordinary) categories, and second, for each pair of objects~$x,y$ of~$\bfC$, the induced map~\hbox{$\bfC(x,y)\to\bfD(Fx,Fy)$} has to be a weak equivalence of spaces. Two simplicial categories are {\it weakly equivalent} if there exists a (finite) zig-zag of weak equivalences between them.

\begin{theorem}\label{thm:equivalences}
Weakly equivalent simplicial categories have weakly equivalent homotopy coherent centers.
\end{theorem}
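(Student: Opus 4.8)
The plan is to reduce the statement about zig-zags to the case of a single weak equivalence, and then to show that a weak equivalence $F\colon\bfC\to\bfD$ induces a weak equivalence $\calZ(\bfC)\simeq\calZ(\bfD)$. Since the homotopy coherent center is defined as $\Tot$ of the cosimplicial space $\Pi^\bullet\bfC$, and $\Tot$ of a fibrant cosimplicial space sends levelwise weak equivalences of fibrant cosimplicial spaces to weak equivalences, the heart of the matter is to produce, from a weak equivalence $F$, a levelwise weak equivalence $\Pi^\bullet\bfC\simeq\Pi^\bullet\bfD$ of fibrant cosimplicial spaces. By Lemma~\ref{lem:fibrant} both are fibrant once we arrange local Kan-ness, which Section~\ref{sec:simplicial} tells us can always be done up to weak equivalence, so I would first reduce to the locally Kan case.

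The main obstacle is that $F$ does not directly induce a map $\Pi^\bullet\bfC\to\Pi^\bullet\bfD$: the two conditions in the Dwyer-Kan definition are asymmetric. The condition that each $\bfC(x,y)\to\bfD(Fx,Fy)$ be a weak equivalence is exactly what I need to compare mapping spaces levelwise, but the condition that $\Ho F$ be an equivalence of categories is only about essential surjectivity and full faithfulness on $\pi_0$, not an honest isomorphism on objects. This means the product $\prod_{x_0,\dots,x_n}$ defining $\Pi^n\bfC$ need not biject with the corresponding product over objects of $\bfD$. First I would handle the fully faithful case, where $F$ is injective on objects and each $\bfC(x,y)\to\bfD(Fx,Fy)$ is a weak equivalence, by showing the levelwise map is a weak equivalence onto the factors indexed by objects in the image. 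Then I would deal with essential surjectivity separately.

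For the essentially surjective but not surjective situation, the clean route is to invoke Remark~\ref{rem:nat}: the homotopy coherent center is the space $\calN(\id_\bfC,\id_\bfC)$ of coherent natural transformations, and the machinery of~\cite{Cordier+Porter:TAMS} and~\cite{Cordier+Porter:Equivariant} shows such spaces of coherent transformations are invariant under Dwyer-Kan equivalence. Concretely, if $\bfD'\subseteq\bfD$ is the full simplicial subcategory on the objects hit by $F$, then the inclusion $\bfD'\hookrightarrow\bfD$ is a Dwyer-Kan equivalence that is surjective on components of objects up to homotopy, and one argues that restricting a coherent family along this inclusion loses no information because every object of $\bfD$ is connected by a homotopy equivalence to one in $\bfD'$; the higher $\Phi^n$ transport the data across these equivalences coherently. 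I would package this as: the restriction $\calZ(\bfD)\to\calN_{\bfD'}$ and the comparison $\calZ(\bfC)\to\calN_{\bfD'}$ are both weak equivalences, where $\calN_{\bfD'}$ denotes the coherent-transformation space built from the image subcategory.

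The step I expect to be genuinely hard is this last invariance under change of the indexing object set, because it is precisely where the homotopy-coherent (as opposed to strict) nature of the center is essential: a strict center would not be invariant here, as the paper itself emphasizes in the introduction and in Section~\ref{sec:homotopy_limit_problem}. The cleanest honest proof likely goes through the model-categorical statement that $\holim$ over the (simplicial) twisted arrow category, or equivalently $\Tot$ of the homotopy end, is a homotopy-invariant functor of the diagram, so that a Dwyer-Kan equivalence of the source simplicial category induces a weak equivalence of homotopy ends. I would therefore spend most of the effort identifying $\calZ(\bfC)$ with a homotopy end and citing the invariance of homotopy ends under equivalences of enriched categories from~\cite{Cordier+Porter:TAMS}, reducing the remaining work to checking that $F$ induces the requisite weak equivalence of homotopy-end diagrams, which follows from the levelwise mapping-space condition together with essential surjectivity.
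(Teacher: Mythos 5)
Your proposal is, in its final packaged form, essentially the paper's own proof: the paper likewise reduces to a single weak equivalence $F\colon\bfC\to\bfD$ and uses the Cordier--Porter space of coherent natural transformations from Remark~\ref{rem:nat} as the middle of a zig-zag $\calZ(\bfC)\to\calN(F,F)\leftarrow\calZ(\bfD)$, given by post- and pre-composition with $F$, both legs being weak equivalences when $F$ is --- which is exactly your $\calZ(\bfC)\to\calN_{\bfD'}\leftarrow\calZ(\bfD)$ formulation together with the appeal to Cordier--Porter. One warning about an intermediate step you propose but do not need: the direct ``levelwise map'' between $\Pi^\bullet\bfC$ and $\Pi^\bullet\bfD$ in the fully faithful case does not exist in either direction, even when $F$ is injective on objects, because each level $\Pi^n$ is contravariant in the source mapping spaces and covariant in the target one, so the detour through the coherent-transformation space is required there as well --- your final version correctly supplies it.
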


\begin{proof}
Let~$\bfC$ and~$\bfD$ be weakly equivalent simplicial categories. It will be sufficient to deal with the case in which we have a weak equivalence~$F\colon\bfC\to\bfD$ between them, and to produce a zig-zag of weak equivalences between their homotopy coherent centers. 

As an intermediate step, we will use the space~$\calN(F,F)$ of homotopy coherent natural transformations from the functor~$F$ to itself as explained in Remark~\ref{rem:nat}. For any simplicial functor~$F\colon\bfC\to\bfD$, pre- and post-composition with~$F$ induces maps
\begin{center}
  \mbox{ 
  \xymatrix{
\calZ(\bfC)\ar[r]^-{F_*}&
\calN(F,F)&
\calZ(\bfD)\ar[l]_-{F^*}
    } 
  }
\end{center}
that are compatible with the multiplications. 

If~$F\colon\bfC\to\bfD$ is a weak equivalence, then so are~$F_*$ and~$F^*$, and the result follows.
\end{proof}


\section{Monoids}\label{sec:monoids}

To illustrate the two different notions of centers that we have defined for simplicial categories so far, simplicial centers and homotopy coherent centers, let us study these in the case where the simplicial category has just one object, so that it is a simplicial monoid~$M$.

From Section~\ref{sec:simplicial}, we recall that its simplicial center~$\rmZ(M)$ is a simplicial abelian monoid with~$n$-simplices~$\rmZ(M)_n=\rmZ(M_n)$, and its homotopy type is determined by its homotopy groups since it is a product of Eilenberg-Mac Lane spaces.

Let us now inspect the homotopy coherent center~$\calZ (M)$. The definition from Section~\ref{sec:homotopy_coherent} gives
\[
\calZ(M)=\Tot(\Pi^\bullet M).
\]
By inspection, the cosimplicial space~$\Pi^\bullet M=\Map(M^\bullet,M)$ is obtained by mapping the bar construction~$M^\bullet=\rmB_\bullet(M)$ into~$M$. The bar construction~$\rmB_\bullet(M)$ is the simplicial space with~$\rmB_n(M)=M^n$, faces are given by multiplication in~$M$, except for the first and last one, which omit the corresponding entries, and degeneracies are given by inserting the identity.

The vertices of~$\calZ(M)$ are the coherent families of maps
\[
\Delta^n\longrightarrow\Map(M^n,M),
\]
or equivalently
\[
z_n\colon M^n\longrightarrow\Map(\Delta^n,M)
\]
by adjunction. (Note the different meanings of the superscript:~$M^n$ is the~$n$-th cartesian power, whereas in~$\Delta^n$ the~$n$ indicates the dimension.) A vertex in the homotopy coherent center~$\calZ(M)$ therefore gives for~$n=0$ an element~$z=z_0$ in~$M$, and for~$n=1$ and each element~$m\in M$ a path
\[
z_1(m)\colon mz\longrightarrow zm
\]
in~$M$, and for~$n=2$ it gives for each pair of elements~$m,n$ in~$M$ a~$2$-simplex~$z_2(m,n)$ in~$M$ that gives a homotopy between the two potentially different paths~$mnz\to zmn$ which are~$z_1(m\cdot n)$ and~$(z_1(m)\cdot n)\circ(m\cdot z_1(n))$. 
\begin{center}
  \mbox{ 
    \xymatrix@C=10pt{
& mzn\ar[dr]^{z_1(m)\cdot n} & \\
mnz\ar[ur]^{m\cdot z_1(n)}\ar[rr]_{z_1(m\cdot n)} && zwn
    } 
  }
\end{center}
This should give an idea of the data encoded in the homotopy coherent center in the case of simplicial monoids.

We will now specialize a bit more in order to relate our definitions to another branch of contemporary topology: homotopical group theory.


\subsection*{Simplicial groups}

In contrast to the general monoid, the elements in a group~$G$ all have inverses, so that we have a conjugation action of~$G$ on itself. The fixed points of this action form the center~$\rmZ(G)$ of~$G$ in the traditional sense. 

This suggests that the homotopy fixed point space of the conjugation action of~$G$ on itself is another natural candidate for the notion of a homotopy coherent center. It can be defined as the space
\[
\Map_G(\rmE G,G_\mathrm{conj})
\]
of equivariant maps from a free resolution~$\rmE G\to\star$ of the universal~$G$-fixed point~$\star$ to~$G_\mathrm{conj}$. The following result shows that the homotopy coherent center in our sense agrees with this concept, when the latter is defined.

\begin{theorem}\label{thm:homotopy_fixed_points}
For every simplicial group~$G$, there is an equivalence
\[
\calZ(G)\simeq\Map_G(\rmE G,G_\mathrm{conj})
\]
of its homotopy coherent center with the homotopy fixed point space of the conjugation action of~$G$ on itself.
\end{theorem}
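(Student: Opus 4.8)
The plan is to identify both sides as totalizations of cosimplicial spaces built from the same combinatorial data, and then exhibit a natural comparison map that is a levelwise weak equivalence. The key observation is that for a simplicial group~$G$ viewed as a one-object simplicial category, the cosimplicial space~$\Pi^\bullet G=\Map(G^\bullet,G)$ of Section~\ref{sec:homotopy_coherent} already resembles the standard cobar-type resolution computing equivariant cohomology. On the other side, the homotopy fixed point space~$\Map_G(\rmE G,G_\mathrm{conj})$ admits a canonical cosimplicial model: using the standard simplicial model of~$\rmE G$ with~$\rmE G_n=G^{n+1}$ (the bar construction on~$G$ with its free diagonal action), the space of equivariant maps into~$G_\mathrm{conj}$ is computed as~$\Tot$ of the cosimplicial space~$n\mapsto\Map_G(G^{n+1},G_\mathrm{conj})$. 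Since~$G$ acts freely on~$G^{n+1}$, restriction to the subspace~$\{e\}\times G^n$ identifies~$\Map_G(G^{n+1},G_\mathrm{conj})$ with~$\Map(G^n,G)$, matching~$\Pi^n G$ dimension by dimension.

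First I would write out the standard cobar resolution for~$\Map_G(\rmE G, G_\mathrm{conj})$ and perform the adjunction/free-orbit reduction just described, obtaining an explicit cosimplicial space that agrees with~$\Pi^\bullet G$ as a graded collection of spaces. Second, and this is where the real content lies, I would check that the coface and codegeneracy operators match. The conjugation action is what makes this work: the cofaces of the equivariant cobar construction involve multiplying by group elements and inserting via the diagonal action, and after the reduction to~$\{e\}\times G^n$ these become precisely the formulas~$f_1\Phi(f_2,\dots,f_n)$, $\Phi(\dots,f_kf_{k+1},\dots)$, and~$\Phi(f_1,\dots,f_{n-1})f_n$ from Section~\ref{sec:homotopy_coherent}, with the outer conjugation by~$z$ accounting for the~$k=0$ and~$k=n$ faces. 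Concretely, a vertex of~$\calZ(G)$ assigns to~$m\in G$ a path from~$mz$ to~$zm$, which is exactly the data of an equivariant map up to the first coherence, as the low-dimensional description in this section already suggests. The degeneracies match by the compatible roles of inserting identities on both sides.

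Once the two cosimplicial spaces are identified—or at least connected by a natural transformation inducing a weak equivalence in each cosimplicial degree—the result follows by applying~$\Tot$, since~$\Tot$ preserves levelwise weak equivalences between fibrant cosimplicial spaces (using Lemma~\ref{lem:fibrant} on the~$\Pi^\bullet G$ side, and the corresponding Reedy fibrancy of the equivariant cobar construction when~$G$ is a simplicial group with Kan underlying complex; one may replace~$G$ by a fibrant model if needed without changing either side up to equivalence by Theorem~\ref{thm:equivalences}). The main obstacle I expect is the bookkeeping in matching the cosimplicial operators: one must be careful that the conjugation twist in~$G_\mathrm{conj}$ is distributed correctly across the outer two coface maps, and that the free-orbit trivialization~$\Map_G(G^{n+1},G)\cong\Map(G^n,G)$ is natural in the cosimplicial direction rather than merely levelwise. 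Handling the base points and the unital structure so that the equivalence is pointed (and, ideally, multiplicative, compatibly with Theorem~\ref{thm:E2}) is the delicate part; the underlying homotopical statement that~$\Tot$ of a levelwise equivalence is an equivalence is routine by comparison.
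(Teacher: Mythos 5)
Your proposal is correct and takes essentially the same route as the paper's proof: the paper likewise uses the simplicial model $\rmE_n(G)=G^{n+1}$, the realization/totalization adjunction, and a levelwise free-orbit trivialization, realized there by the explicit $G$-isomorphisms $\rmB_n(G)\times G\to\rmE_n(G)$, $(f_1,\dots,f_n,g)\mapsto(f_1\cdots f_ng,\dots,f_ng,g)$. These successive-difference coordinates are precisely the cosimplicially natural trivialization (with the conjugation twist absorbed into the outer cofaces) that you defer to bookkeeping, and they give an isomorphism of cosimplicial spaces, so the fibrancy argument you invoke for comparing totalizations is not even needed.
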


\begin{proof}
In the right hand side, we may choose a particular model for~$\rmE G$: the geometric realization~$|\rmE_\bullet(G)|$ of the simplicial space~$\rmE_\bullet(G)$ that has~$\rmE_n(G)=G^{n+1}$, face maps omit factors, and degeneracy maps repeat them. It has a simplicial action of~$G$ (actually from both sides). 

Now an inspection shows that there is in fact an isomorphism between the two spaces in question: On the one hand, we have 
\begin{align*}
\Tot(\Map(\rmB_\bullet(G),G))
&=c\bfS(\Delta^\bullet,\Map(\rmB_\bullet(G),G))\\
&\cong\Map(\Delta^\bullet\otimes_\Delta\rmB_\bullet(G),G)\\
&\cong\Map_G(\Delta^\bullet\otimes_\Delta\rmB_\bullet(G)\times G,G_\mathrm{conj}),
\end{align*}
by the universal property of the tensor product, and on the other hand we have
\[
\Map_G(|\rmE_\bullet(G)|,G_\mathrm{conj})
=\Map_G(\Delta^\bullet\otimes_\Delta\rmE_\bullet(G),G_\mathrm{conj})
\]
by definition of the geometric realization. There are~$G$-isomorphisms
\[
\rmB_n(G)\times G\longrightarrow\rmE_n(G),
(f_1,\dots,f_n,g)\longmapsto(f_1\dots f_ng,f_2\dots f_ng,\dots,f_ng,g)
\]
with inverses
\[
\rmE_n(G)\longrightarrow\rmB_n(G)\times G,
(h_0,\dots,h_n)\longmapsto(h_0h_1^{-1},\dots,h_{n-1}h_n^{-1},h_n),
\]
respectively. These allow to connect the remaining bits.
\end{proof}

We note that the equivalence in the theorem allows us to model the map~$\rmZ(G)\to\calZ(G)$ as the inclusion of the fixed points into the homotopy fixed points.


The following (folklore) result gives another presentation of the homotopy fixed point space of the conjugation action of~$G$ on itself, and hence also of the homotopy coherent center of~$G$.

\begin{proposition}
For every simplicial group~$G$, there is an equivalence
\[
\Map_G(\rmE G,G_\mathrm{conj})\simeq\Omega(\Map(\rmB G, \rmB G),\id)
\]
with the space of loops in the space of self-maps of~$\rmB G$ which are based at the identity.
\end{proposition}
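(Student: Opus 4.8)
The plan is to route through the Borel construction of the conjugation action and to identify it with the free loop space of~$\rmB G$. First I would rewrite the homotopy fixed point space as a space of sections. For any $G$-space~$X$, a $G$-equivariant map $\rmE G\to X$ is the same datum as a section of the associated bundle $\rmE G\times_G X\to\rmB G$, the section attached to~$f$ sending~$[e]$ to~$[e,f(e)]$; this is a natural isomorphism of mapping spaces, compatible with the simplicial structure. Applied to $X=G_{\mathrm{conj}}$ it yields
\[
\Map_G(\rmE G,G_{\mathrm{conj}})\cong\Gamma\bigl(\rmE G\times_G G_{\mathrm{conj}}\to\rmB G\bigr),
\]
so the problem becomes one about the conjugation Borel fibration $G\to\rmE G\times_G G_{\mathrm{conj}}\to\rmB G$ and its sections.

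The heart of the argument is the classical identification of this Borel construction with the free loop space: there is an equivalence $\rmE G\times_G G_{\mathrm{conj}}\simeq\Map(S^1,\rmB G)$ over~$\rmB G$, the bundle projection corresponding to evaluation at the basepoint of~$S^1$. This is most transparent from the groupoid picture, in which $\rmB G$ is the nerve of the one-object groupoid with morphism group~$G$ and its free loop space is the realization of the inertia (conjugation) groupoid; combinatorially it is governed by exactly the $G$-isomorphisms $\rmB_n(G)\times G\cong\rmE_n(G)$ already used in the proof of Theorem~\ref{thm:homotopy_fixed_points}. I expect the main obstacle to be carrying out this identification explicitly and compatibly with the projection to~$\rmB G$, so that the two spaces of \emph{sections} are matched and not merely the total spaces.

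Granting this, the proposition follows formally. The exponential law gives isomorphisms
\[
\Map\bigl(\rmB G,\Map(S^1,\rmB G)\bigr)\cong\Map(S^1\times\rmB G,\rmB G)\cong\Map\bigl(S^1,\Map(\rmB G,\rmB G)\bigr),
\]
and under this chain the map induced by post-composition with the evaluation $\Map(S^1,\rmB G)\to\rmB G$ becomes the evaluation $\Map(S^1,\Map(\rmB G,\rmB G))\to\Map(\rmB G,\rmB G)$ at the basepoint of~$S^1$. The space of sections is exactly the fibre of the first map over~$\id_{\rmB G}$, hence it is carried to the fibre of the second over~$\id$, which is by definition $\Omega(\Map(\rmB G,\rmB G),\id)$. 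Concatenating the three identifications produces the claimed equivalence; and since a strict fixed point maps to the constant loop, one checks that it is compatible with the inclusion $\rmZ(G)\to\calZ(G)$ recorded after Theorem~\ref{thm:homotopy_fixed_points}.
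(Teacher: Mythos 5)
Your proposal is correct and follows the paper's own route: both arguments identify $\Map_G(\rmE G,G_{\mathrm{conj}})$ with the space of sections of the Borel fibration $\rmE G\times_G G_{\mathrm{conj}}\to\rmB G$, identify $\Omega(\Map(\rmB G,\rmB G),\id)$ with the space of sections of the free loop fibration $\Lambda\rmB G\to\rmB G$ (which you justify by the exponential law, the paper by assertion), and then invoke the classical equivalence $\rmE G\times_G G_{\mathrm{conj}}\simeq\Lambda\rmB G$ of spaces over $\rmB G$, which the paper, like you, does not prove but refers to the literature (Goodwillie, Burghelea--Fiedorowicz, B\"okstedt--Hsiang--Madsen, or Benson). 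The step you flag as the main obstacle is thus exactly the step the paper outsources to citations, so your proof is at the same level of completeness as the published one.
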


\begin{proof}
The space~$\Map_G(\rmE G,G_\mathrm{conj})$ can be identified with the space of sections of the Borel construction bundle
\[
\rmE G\times_GG_\mathrm{conj}\longrightarrow\rmB G,
\]
and the space~$\Omega(\Map(\rmB G, \rmB G),\id)$ can be identified with the space of sections of the free loop space evaluation bundle
\[
\Lambda\rmB G\longrightarrow\rmB G.
\]
In remains to note that there is an equivalence
\[
\rmE G\times_GG_\mathrm{conj}\simeq\Lambda\rmB G
\]
of spaces over~$\rmB G$. See the original sources~\cite{Goodwillie},~\cite{Burghelea+Fiedorowicz},~\cite{Bökstedt+Hsiang+Madsen}, or~\cite{Benson} for a textbook treatment.
\end{proof}

\begin{corollary}\label{cor:DWmodel}
For every simplicial group~$G$, there is an equivalence
\[
\calZ(G)\simeq\Omega(\Map(\rmB G, \rmB G),\id)
\]
of its homotopy coherent center with the space of loops in the space of self-maps of~$\rmB G$ which are based at the identity.
\end{corollary}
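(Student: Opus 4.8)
The plan is to obtain this equivalence by simply concatenating the two results that immediately precede it, since the left-hand and right-hand sides displayed here are exactly the two ends of a chain whose middle term is the homotopy fixed point space $\Map_G(\rmE G,G_\mathrm{conj})$. No new construction is needed.

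First I would invoke Theorem~\ref{thm:homotopy_fixed_points}, which provides an equivalence
\[
\calZ(G)\simeq\Map_G(\rmE G,G_\mathrm{conj})
\]
identifying the homotopy coherent center of the simplicial group~$G$ with the homotopy fixed point space of its conjugation action. Next I would invoke the preceding Proposition, which supplies an equivalence
\[
\Map_G(\rmE G,G_\mathrm{conj})\simeq\Omega(\Map(\rmB G,\rmB G),\id)
\]
between that same homotopy fixed point space and the based loop space of the self-map space of~$\rmB G$. Composing the two yields the asserted equivalence~$\calZ(G)\simeq\Omega(\Map(\rmB G,\rmB G),\id)$.

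The only point requiring any attention is that~$\simeq$ denotes weak equivalence of spaces, a relation that becomes transitive after passing to the homotopy category, or concretely upon concatenating the underlying zig-zags of weak equivalences. Since both intermediate statements are genuine equivalences in this sense, their concatenation is again one, so there is no real obstacle: the corollary is a purely formal consequence of the two results above, and nothing further needs to be verified.
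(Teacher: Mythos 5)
Your proposal is correct and matches the paper's (implicit) argument exactly: the corollary is stated with no separate proof precisely because it is the formal concatenation of Theorem~\ref{thm:homotopy_fixed_points} with the folklore proposition immediately preceding it. Your remark about transitivity of weak equivalence via zig-zags is the only point of care, and you have handled it properly.
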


The right hand side of this equivalence is used a definition for the homotopy center of~$p$-compact groups in~\cite{Dwyer+Wilkerson}. 

A still more general formula for the homotopy coherent center of a simplicial groupoid will be given later, see Proposition~\ref{prop:center_formula}.


\section{The homotopy limit problem}\label{sec:homotopy_limit_problem}

It is clear from our description of the simplicial center and the homotopy coherent center as a limit and as a homotopy limit, respectively, that there is a natural map
\begin{equation}\label{eq:homotopy_limit_map}
\rmZ(\bfC)\longrightarrow\calZ(\bfC).
\end{equation}
The question whether this map is an equivalence, or how far off it is, can be compared with the difficult homotopy limit problem, see~\cite{Thomason:HLP} and~\cite{Carlsson:HLP}. 

The two notions of center clearly agree for discrete categories.

\begin{proposition}
For every discrete category~$\bfC$, the arrow
\[
\rmZ(\bfC)\longrightarrow\calZ(\bfC)
\]
is an equivalence; the domain is discrete.
\end{proposition}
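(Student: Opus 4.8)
The plan is to exploit the fact that for a discrete category every mapping space is a constant simplicial set, so that the whole cosimplicial space~$\Pi^\bullet\bfC$ is levelwise discrete, and then to show that the totalization of a levelwise discrete fibrant cosimplicial space is discrete and agrees with its limit. First I would record the input. If~$\bfC$ is discrete, each~$\bfC(x,y)$ is a (constant) discrete simplicial set, and a mapping space~$\Map(X,Y)$ between discrete simplicial sets is again discrete, namely the set of functions~$X\to Y$. Hence each~$\Pi^n\bfC$ is discrete and~$\Pi^\bullet\bfC$ is a levelwise discrete cosimplicial space. Moreover a discrete category is locally Kan, so Lemma~\ref{lem:fibrant} applies and~$\Pi^\bullet\bfC$ is fibrant; this guarantees that~$\Tot(\Pi^\bullet\bfC)=\calZ(\bfC)$ computes the genuine homotopy limit.

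The core step is to analyze the~$\Tot$-tower~$\cdots\to\Tot_k\to\Tot_{k-1}\to\cdots\to\Tot_0=\Pi^0\bfC$. For a fibrant cosimplicial space each stage is a fibration whose fiber over a vertex is a based mapping space~$\Map_*(\Delta^k/\partial\Delta^k,\,N^k)$, where~$N^k$ is the fiber of the matching map~$\Pi^k\bfC\to M^{k-1}\Pi^\bullet\bfC$. Because~$\Pi^\bullet\bfC$ is levelwise discrete, $N^k$ is discrete; and since~$\Delta^k/\partial\Delta^k\cong S^k$ is connected for~$k\geqslant1$, every such based mapping space is either empty or a single point. By induction each~$\Tot_k$ is therefore discrete and each tower map is injective on vertices, so~$\calZ(\bfC)=\Tot(\Pi^\bullet\bfC)=\lim_k\Tot_k$ is discrete as well. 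Equivalently, one can run the Bousfield--Kan homotopy spectral sequence of~$\Pi^\bullet\bfC$: since~$\pi_t\Pi^n\bfC=0$ for~$t\geqslant1$, the~$E_2$-page is concentrated in the row~$t=0$, the contributions~$E_2^{s,0}$ would land in~$\pi_{-s}$ and hence vanish for~$s>0$, leaving only~$\pi_0$.

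Finally I would identify this discrete totalization with~$\rmZ(\bfC)$. The inverse limit of the tower selects exactly the families in~$\Pi^0\bfC$ that lift compatibly through all stages; the very first condition, coming from~$\Tot_1\to\Tot_0$, is the equality of the two coface maps~$\Pi^0\bfC\to\Pi^1\bfC$, which by Remark~\ref{rem:simplicial_center_as_limit} is precisely the equalizer defining~$\rmZ(\bfC)=\lim\Pi^\bullet\bfC$. One checks that the higher lifting conditions are then automatically satisfied, since a point of this equalizer extends canonically over~$\Delta^\bullet$ by the codegeneracies, so the natural comparison map~\eqref{eq:homotopy_limit_map} from the limit to the homotopy limit is a bijection on vertices between two discrete spaces, hence an equivalence; and~$\rmZ(\bfC)$ itself is discrete.

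The step I expect to require the most care is the passage from ``$\Tot$ is discrete'' to ``$\Tot$ equals the equalizer.'' The spectral sequence only identifies the associated graded of~$\pi_0$ as a pointed set and is subject to the usual fringe ambiguities at the edge, so the clean route is the explicit tower computation above, where I must verify both that each matching fiber~$N^k$ is discrete and that the higher cocycle conditions impose nothing beyond the single equalizer condition. The remaining identifications are then routine.
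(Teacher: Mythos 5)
Your proof is correct, but you should know that the paper offers no argument at all for this proposition: it is recorded as something that ``clearly'' holds, so your write-up supplies a justification rather than parallels one. Your preliminary observations are all right: for discrete~$\bfC$ each~$\Pi^n\bfC$ is a discrete simplicial set (maps between discrete simplicial sets form the discrete set of functions), a discrete category is locally Kan, and Lemma~\ref{lem:fibrant} applies. The tower analysis is also sound, including the hedge that the fibers of~$\Tot_k\to\Tot_{k-1}$ are empty or a point, since these fibrations need not be surjective. That said, the route through matching fibers and based mapping spaces~$\Map_*(\Delta^k/\partial\Delta^k,N^k)$ is heavier than needed, and it is what forces on you the delicate final step you flag. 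A shortcut makes everything transparent: every simplicial map from a connected simplicial set to a discrete one is constant, so an~$n$-simplex of~$\calZ(\bfC)=\Tot(\Pi^\bullet\bfC)=c\bfS(\Delta^\bullet,\Pi^\bullet\bfC)$, that is, a cosimplicial map~$\Delta^\bullet\times\Delta^n\to\Pi^\bullet\bfC$, is levelwise constant, hence is exactly a family of points~$x_k\in\Pi^k\bfC$ compatible with all cosimplicial structure maps, i.e. a point of~$\lim\Pi^\bullet\bfC$, independently of~$n$. Thus~$\Tot(\Pi^\bullet\bfC)$ is the constant simplicial set on~$\lim\Pi^\bullet\bfC$, which is~$\rmZ(\bfC)$ by Remark~\ref{rem:simplicial_center_as_limit}, and the comparison map~\eqref{eq:homotopy_limit_map} is an isomorphism of simplicial sets, not merely a weak equivalence. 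This disposes of discreteness and of the equalizer identification in one stroke, with no tower and no spectral sequence (whose fringe ambiguities you rightly distrust). One small slip to correct in your text: the canonical extension of an equalizer point over the whole cosimplicial object goes by iterated cofaces, $x_n=(d^0)^nx_0$, not by codegeneracies; this is the standard fact that the parallel pair~$[0]\rightrightarrows[1]$ is initial in~$\Delta$, which is also what guarantees that the higher lifting conditions are vacuous.
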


Of course, this results applies, in particular, to discrete monoids, discrete groups, and discrete groupoids. 

We are now in a position to use the different descriptions of the homotopy coherent center of a simplicial group~$G$ given in Section~\ref{sec:monoids} in order to discuss the homotopy limit map~$\rmZ(G)\to\calZ(G)$ in various other special cases. Here is anther case that can be dealt with.

\begin{proposition}\label{prop:some_sag}
Let~$A$ be a simplicial abelian group. If~$A$ is an Eilenberg-Mac Lane space or if~$A$ has the homotopy type of an abelian compact Lie group, then the arrow
\[
\rmZ(A)\longrightarrow\calZ(A)
\]
is an equivalence.
\end{proposition}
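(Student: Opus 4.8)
Since $A$ is a simplicial abelian group, the conjugation action of $A$ on itself is trivial. The plan is therefore to exploit Theorem~\ref{thm:homotopy_fixed_points}: for a trivial action the homotopy fixed points reduce to an ordinary mapping space, so
\[
\calZ(A)\;\simeq\;\Map_A(\rmE A,A_\mathrm{conj})\;\cong\;\Map(\rmB A,A).
\]
At the same time $\rmZ(A)$ is the space of strict fixed points, which for the trivial action is all of $A$, and by the observation following Theorem~\ref{thm:homotopy_fixed_points} the arrow $\rmZ(A)\to\calZ(A)$ is the inclusion of fixed points into homotopy fixed points. Under the identification above this is exactly the map
\[
A=\Map(*,A)\longrightarrow\Map(\rmB A,A)
\]
induced by $\rmB A\to*$, sending an element to the corresponding constant map. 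So the whole statement becomes the assertion that this constant-map inclusion is an equivalence.

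First I would split off the constant maps. Evaluation at the basepoint is a fibration
\[
\Map_*(\rmB A,A)\longrightarrow\Map(\rmB A,A)\longrightarrow A,
\]
of which the constant-map inclusion is a section; since a section of a fibration is a weak equivalence exactly when the fiber is weakly contractible, the statement reduces to showing that the based mapping space $\Map_*(\rmB A,A)$ is weakly contractible. Writing $A\simeq\prod_m K(\pi_mA,m)$ as a product of Eilenberg--Mac Lane spaces (Moore's theorem, \cite{Moore}) and passing to cohomology gives
\[
\pi_k\Map_*(\rmB A,A)\;\cong\;\prod_m\widetilde{H}^{\,m-k}(\rmB A;\pi_mA),
\]
so contractibility is equivalent to the vanishing $\widetilde{H}^{\,j}(\rmB A;\pi_mA)=0$ for all $m$ and all $1\leqslant j\leqslant m$. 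This cohomological condition is what I would check case by case.

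If $A$ is an Eilenberg--Mac Lane space $K(\pi,n)$, then $\rmB A\simeq K(\pi,n+1)$ is $n$-connected, so $\widetilde{H}^{\,j}(\rmB A;\pi)=0$ for every $j\leqslant n$; as the only nonzero coefficient group sits in degree $m=n$ and the required range is $j\leqslant n$, the vanishing holds and the map is an equivalence. If instead $A$ has the homotopy type of an abelian compact Lie group, then $A\simeq(S^1)^r\times F$ with $F$ finite, so the nonzero homotopy groups are $\pi_0A\cong F$ and $\pi_1A\cong\bbZ^r$, and the single condition to verify is $\widetilde{H}^{\,1}(\rmB A;\bbZ^r)=0$. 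Here $\rmB A$ is connected with $\pi_1\rmB A\cong\pi_0A\cong F$, whence by the universal coefficient theorem $H^1(\rmB A;\bbZ^r)\cong\Hom(F,\bbZ^r)=0$ because $F$ is finite.

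The computations themselves are routine, so the real content lies in seeing why the two hypotheses are exactly the ones that make them go through, and this is the point I would be most careful about. For an Eilenberg--Mac Lane space everything is forced by the connectivity of $\rmB A$; for a compact group it is forced by the finiteness of the component group $\pi_0A$, which kills $\Hom(\pi_0A,-)$ into a free group. Neither holds for a general simplicial abelian group: for the non-compact abelian Lie group $A\simeq\bbZ\times S^1$ one has $\pi_0A\cong\bbZ$ and $\pi_1A\cong\bbZ$, so $\widetilde{H}^{\,1}(\rmB A;\pi_1A)\cong\Hom(\bbZ,\bbZ)\neq0$, the based mapping space is not contractible, and $\rmZ(A)\to\calZ(A)$ fails to be an equivalence. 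Thus the main obstacle is conceptual rather than computational: pinning down the vanishing range $1\leqslant j\leqslant m$ and confirming that compactness, not mere commutativity, is what guarantees it.
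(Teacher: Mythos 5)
Your proof is correct and takes essentially the same route as the paper: both reduce, via the homotopy-fixed-point description of $\calZ(A)$ (Theorem~\ref{thm:homotopy_fixed_points}) and triviality of the conjugation action on an abelian group, to showing that $A=\Map(\star,A)\longrightarrow\Map(\rmB A,A)$ is an equivalence, and then settle this by a cohomological computation for $\rmB A$ in the two cases. The only difference is presentational: the paper simply cites Thom's theorem on mapping spaces into Eilenberg-Mac Lane spaces, whereas you rederive that input by splitting off the based mapping space along the evaluation fibration and computing its homotopy groups via Moore's theorem.
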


\begin{proof}
We have to show that the natural map
\[
A=\Map(\star,A)\longrightarrow\Map(\rmB A,A)
\]
is an equivalence. This follows in the first case from Thom's determination of the homotopy type of the mapping space into an Eilenberg-Mac Lane space, see~\cite{Thom}. The second case follows from the same result and the fact that~$A$ is a product of a finite group with a torus.
\end{proof}

We will revisit the first case of the preceding result in Example~\ref{ex:EMspaces} from a different perspective. 

The consequence of Proposition~\ref{prop:some_sag} does not hold for all simplicial abelian groups in general, even though they are known to be products of Eilenberg-Mac Lane spaces by Moore's theorem~\cite{Moore}.

\begin{example}
If we consider~$A\simeq\bbZ\times\rmS^1$, then we have~$\rmB A\simeq\rmS^1\times\mathbb{C}\mathrm{P}^\infty$, and the components are~$\pi_0\calZ(A)\cong\bbZ^2$ in contrast to~$\pi_0\rmZ(A)\cong\bbZ$.
\end{example}

This example shows that the homotopy limit map~\eqref{eq:homotopy_limit_map} need not be an equivalence, not even after passage to classifying spaces and on homology with finite coefficients. This may be compared to the following result. 

\begin{theorem} {\upshape (\cite[Theorem~1.4]{Dwyer+Wilkerson})}
Let~$G$ be connected compact Lie group. Then the natural map
\[
\rmB\rmZ(G)\longrightarrow\Map(\rmB G,\rmB G)_{\id}
\]
induces an isomorphism on homology with finite coefficients.
\end{theorem}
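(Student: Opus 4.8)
The result is quoted as a black box above, so let me indicate the route I would take to prove it. The first move is a reduction to one prime at a time. Both sides are nilpotent spaces---the source $\rmB\rmZ(G)$ because $\rmZ(G)$ is abelian, and the component $\Map(\rmB G,\rmB G)_{\id}$ because its fundamental group is nilpotent and acts nilpotently on the higher groups---so a map between them is an isomorphism on homology with finite coefficients exactly when it induces an isomorphism on $H_*(-;\mathbb{F}_p)$ for every prime $p$. Since the spaces are $p$-good, I would fix $p$, pass to Bousfield--Kan $p$-completions, and aim to show that the $p$-completed comparison is a homotopy equivalence; assembling over all $p$ then yields the finite-coefficient statement.

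The second move is to rewrite both sides in the language developed above. By Corollary~\ref{cor:DWmodel} the target is a connected delooping of the homotopy coherent center, $\Map(\rmB G,\rmB G)_{\id}\simeq\rmB\calZ(G)$, and by Theorem~\ref{thm:homotopy_fixed_points} we have $\calZ(G)\simeq\Map_G(\rmE G,G_{\mathrm{conj}})$, the homotopy fixed points of the conjugation action. On the other side, $\rmZ(G)$ is the space of genuine fixed points $G^{G}=\Map_G(\star,G_{\mathrm{conj}})$, and (as already noted after Theorem~\ref{thm:homotopy_fixed_points}) the map in the theorem is the one-fold delooping of the inclusion of fixed points into homotopy fixed points. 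So the theorem becomes a Sullivan-conjecture type assertion for the conjugation action: after delooping, the comparison $\rmZ(G)\to\calZ(G)$ is a mod-$p$ homology equivalence.

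To prove the $p$-complete statement I would compute the self-map space through the elementary abelian $p$-subgroups of $G$. There is a centralizer decomposition exhibiting $\rmB G^{\wedge}_p$ as the $p$-completion of $\operatorname{hocolim}_V\rmB C_G(V)$, indexed on the category of non-trivial elementary abelian $p$-subgroups $V\le G$. Mapping $\rmB G$ out of this colimit converts the self-map space into a homotopy limit,
\[
\Map(\rmB G,\rmB G)^{\wedge}_p\simeq\holim_V\Map(\rmB C_G(V),\rmB G)^{\wedge}_p,
\]
and the Dwyer--Zabrodsky and Notbohm computations of mapping spaces out of classifying spaces of $p$-toral groups identify the relevant components with classifying spaces of iterated centralizers. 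Restricting to the identity component, the homotopy limit takes the shape $\holim_V\rmB\rmZ(C_G(V))^{\wedge}_p$; one checks $\lim_V\rmZ(C_G(V))\cong\rmZ(G)$, so that the limit should collapse to $\rmB\rmZ(G)^{\wedge}_p$, recovering the source and exhibiting the comparison as an equivalence.

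The main obstacle is precisely this collapse. One must show that the higher derived limits $\lim^{s}$ for $s\ge1$ of the functors $V\mapsto\rmZ(C_G(V))$---equivalently, the higher terms of the Bousfield--Kan spectral sequence for the homotopy limit over the relevant Quillen category---vanish $p$-adically, so that no correction terms survive above $\rmB\rmZ(G)^{\wedge}_p$. This sharpness is the delicate content: it rests on Lannes' $T$-functor computations of $H^*(\rmB C_G(V);\mathbb{F}_p)$ as unstable modules and on the (nontrivial) fact that the centralizer decomposition is a mod-$p$ equivalence. By comparison the reductions and bookkeeping of the earlier steps are formal; the genuine difficulty, and the reason the theorem is due to Dwyer and Wilkerson, lies in this $p$-adic homotopy-fixed-point calculation.
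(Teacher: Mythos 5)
First, note that the paper contains no proof of this statement: it is quoted, with attribution, from \cite[Theorem~1.4]{Dwyer+Wilkerson}, and serves only to contrast with the preceding example~$A\simeq\bbZ\times\rmS^1$. So there is nothing internal to compare your sketch against; it can only be measured against the literature it is reconstructing.

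Measured that way, your overall strategy is the right one---one prime at a time, a homology decomposition of~$\rmB G$, mapping-space computations in the style of Dwyer--Zabrodsky and Lannes, and vanishing of higher limits---but the middle step, as you have written it, is circular. The centralizer decomposition is indexed by nontrivial elementary abelian $p$-subgroups~$V\leqslant G$, and turning it into a homotopy limit requires computing~$\Map(\rmB C_G(V),\rmB G^{\wedge}_p)$, i.e.\ mapping out of the classifying space of a full compact Lie group: precisely the kind of computation the theorem is about. Dwyer--Zabrodsky and Notbohm cover sources~$\rmB P$ with~$P$ a finite $p$-group or $p$-toral, and~$C_G(V)$ is almost never $p$-toral; in the worst case~$V$ is central and~$C_G(V)=G$, so your homotopy limit has the original problem as one of its entries. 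The literature breaks this circle in one of two ways: Jackowski--McClure--Oliver \cite{Jackowski+McClure+Oliver} use the \emph{subgroup} decomposition over $p$-stubborn ($p$-toral) subgroups~$R\leqslant G$, for which~$\Map(\rmB R,\rmB G^{\wedge}_p)$ genuinely is covered by Dwyer--Zabrodsky/Notbohm, and then carry out their $\Lambda$-functor computations to kill the higher limits; Dwyer--Wilkerson \cite{Dwyer+Wilkerson} work in the $p$-compact category, where centralizers of elementary abelians are again $p$-compact groups but strictly smaller ones whenever~$V$ is non-central, so that an induction on size is available, the central case being handled separately. Your sketch needs one of these devices. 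Two further soft spots: the nilpotence of~$\Map(\rmB G,\rmB G)_{\id}$ is asserted rather than proved ($\rmB G$ is not a finite complex, so the standard nilpotence theorems for mapping-space components do not apply; what is actually used is $\mathbb{F}_p$-goodness in the sense of Bousfield--Kan), and the vanishing of the higher derived limits, which you correctly single out as the heart of the matter, is exactly the hard content of those papers---so what you have is an accurate map of where the proof lives, not a proof.
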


Earlier results in this direction had been obtained in~\cite[Theorem~3]{Jackowski+McClure+Oliver} in the case when~$G$ is simple, and in~\cite{Dwyer+Mislin} when~$G=\mathrm{SU}(2)$.


\section{Spectral sequences and obstructions}\label{sec:otss}

In this section we answer the question about the kernel and image of the map
\[
\pi_0\calZ(\bfC)\longrightarrow\rmZ(\Ho\bfC),
\]
by means of spectral sequences and obstruction theory, respectively. 

A ubiquitous problem when discussing realization questions is the potential emptiness of realization spaces. This is irrelevant here, since the (homotopy coherent) center is always nonempty. Moreover, its set of components has the structure of an abelian monoid. This leads to a simplification of another difficulty that one often encounters when computing homotopy groups of spaces be means of such spectral sequences: the lack of~(abelian) group structures on~$\pi_0$ and~$\pi_1$.


\subsection*{Spectral sequences}

If~$\bfC$ is a simplicial category that is locally Kan, then its homotopy coherent center~$\calZ(\bfC)~$ has been defined as the totalization of the cosimplicial space~$\Pi^\bullet\bfC$. This is pointed and fibrant, see Lemma~\ref{lem:fibrant}. Bousfield and Kan, see~\cite{Bousfield+Kan} and~\cite{Bousfield} as well as the textbook~\cite[VIII.1]{Goerss+Jardine}, have shown that there is always a spectral sequence for the totalization of such a cosimplicial space. 

\begin{theorem}\label{thm:ss}
For every simplicial category~$\bfC$ that is locally Kan, there exists a fringed spectral sequence with target~$\pi_{t-s}\calZ(\bfC)$ and
\[
E_2^{s,t}=\pi^s\pi_t\Pi^\bullet\bfC
\]
for~$t\geqslant s\geqslant0$. In particular,
\[
E_2^{0,0}\cong\rmZ(\Ho\bfC)
\] 
is the center of its homotopy category.
\end{theorem}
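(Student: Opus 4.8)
The plan is to obtain the spectral sequence directly from the Bousfield--Kan machine and then to compute its bottom corner by hand. For the existence I would invoke the homotopy spectral sequence of a cosimplicial space in the form recorded in~\cite[VIII.1]{Goerss+Jardine} (following~\cite{Bousfield+Kan} and~\cite{Bousfield}). The only hypotheses needed are that~$\Pi^\bullet\bfC$ be pointed and fibrant: it is canonically pointed by the families of composition maps, as noted after Remark~\ref{rem:simplicial_center_as_limit}, and it is fibrant by Lemma~\ref{lem:fibrant}, using that~$\bfC$ is locally Kan. This produces a fringed spectral sequence with~$E_2^{s,t}=\pi^s\pi_t\Pi^\bullet\bfC$ for~$t\geqslant s\geqslant0$, abutting to~$\pi_{t-s}\Tot(\Pi^\bullet\bfC)=\pi_{t-s}\calZ(\bfC)$; the fringing along the lines~$t=s$ and~$t=s+1$ is exactly as in the cited references, and the corner~$E_2^{0,0}$ is the pointed set~$\pi^0\pi_0\Pi^\bullet\bfC$.

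The substance of the statement is therefore the identification of this corner. By definition~$\pi^0$ of a cosimplicial (pointed) set is the equalizer of its two cofaces, so
\[
E_2^{0,0}=\pi^0\pi_0\Pi^\bullet\bfC=\mathrm{eq}\bigl(\pi_0\Pi^0\bfC\rightrightarrows\pi_0\Pi^1\bfC\bigr).
\]
Here~$\pi_0\Pi^0\bfC=\prod_x\pi_0\bfC(x,x)=\prod_x(\Ho\bfC)(x,x)$ is the product of the endomorphism monoids of~$\Ho\bfC$. I would then unwind the two cofaces~$d^0,d^1\colon\Pi^0\bfC\to\Pi^1\bfC$ from the explicit formulas of Section~\ref{sec:homotopy_coherent}: on a family~$\Phi=(\Phi_x)$ they are the two maps of~\eqref{eq:lim_map}, namely right multiplication~$f\mapsto f\Phi_{x_1}$ and left multiplication~$f\mapsto\Phi_{x_0}f$. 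Passing to~$\pi_0$ and comparing with the cosimplicial set attached to the discrete category~$\Ho\bfC$, whose limit is~$\rmZ(\Ho\bfC)$ by Remark~\ref{rem:simplicial_center_as_limit} applied to~$\Ho\bfC$ (cf.\ Definition~\ref{def:simplicial_center}), the equalizer condition becomes precisely the requirement that the classes~$[\Phi_x]$ commute with every morphism of~$\Ho\bfC$. This is the defining condition for an element of the center~$\rmZ(\Ho\bfC)$, which yields the asserted isomorphism.

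The step I expect to be the main obstacle is exactly this last identification, and specifically the comparison between~$\pi_0$ of the mapping spaces and maps of components at the set level. The two cofaces live in~$\pi_0\Pi^1\bfC=\prod_{x_0,x_1}\pi_0\Map(\bfC(x_1,x_0),\bfC(x_1,x_0))$, so the equalizer is a priori a condition on \emph{homotopy classes} of the left- and right-multiplication self-maps, whereas centrality in~$\Ho\bfC$ is a condition on their induced maps of path components. One must therefore control the natural map from~$\pi_0\Map(\bfC(x_1,x_0),\bfC(x_1,x_0))$ to the self-maps of the set~$\pi_0\bfC(x_1,x_0)$ on precisely the classes that arise as~$d^0[\Phi]$ and~$d^1[\Phi]$; the care lies in checking that the equalizer taken at the level of homotopy classes reproduces the categorical center on the nose, rather than merely mapping into it. Once this is secured, the remaining entries of the~$E_2$-page, together with the interpretation of the edge homomorphism as the map~$\pi_0\calZ(\bfC)\to\rmZ(\Ho\bfC)$ introduced in Section~\ref{sec:equivalences}, follow formally from the construction of the spectral sequence.
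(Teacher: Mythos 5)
Your proposal takes the same route as the paper: existence of the fringed spectral sequence is quoted from Bousfield--Kan, using that $\Pi^\bullet\bfC$ is pointed by compositions and fibrant by Lemma~\ref{lem:fibrant}, so everything rests on identifying the corner $E_2^{0,0}=\pi^0\pi_0\Pi^\bullet\bfC$. The paper settles that corner in one sentence: a family of homotopy classes $([\Phi_x])$ lies in the equalizer ``if and only if $f\Phi_x$ and $\Phi_yf$ are homotopic for all $f\colon x\to y$,'' which is then read off as centrality in $\Ho\bfC$. This silently replaces the actual equalizer condition --- that $f\mapsto f\Phi_{x_1}$ and $f\mapsto\Phi_{x_0}f$ be homotopic \emph{as self-maps} of $\bfC(x_1,x_0)$, i.e.\ equal in $\pi_0\Map(\bfC(x_1,x_0),\bfC(x_1,x_0))$ --- by the weaker condition that they induce the same self-map of the set $\pi_0\bfC(x_1,x_0)$. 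You refused to make exactly this replacement and flagged it as the main obstacle; that is the correct instinct, but your proposal then leaves the obstacle standing (``once this is secured\dots''), so as written it proves only that $E_2^{0,0}$ includes into $\rmZ(\Ho\bfC)$, not that the inclusion is an isomorphism.

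The reason you could not close this step is that it is not closable: the inclusion can be proper, so the identification fails in general, both in your proposal and in the paper's own proof. Concretely, let $\bfC$ be the one-object simplicial category whose mapping space is the (locally Kan) simplicial group $G=\mathrm{Sing}\,\mathrm{O}(2)$. Then $\Ho\bfC$ is the one-object category on $\pi_0G\cong\bbZ/2$, which is abelian, so $\rmZ(\Ho\bfC)\cong\bbZ/2$. But for a reflection $z$ the classes $d^0[z]=[f\mapsto fz]$ and $d^1[z]=[f\mapsto zf]$ differ in $\pi_0\Map(G,G)$: composing with the simplicial automorphism $f\mapsto fz^{-1}$ shows they agree if and only if conjugation $c_z\colon f\mapsto zfz^{-1}$ is homotopic to $\id_G$, and $c_z$ restricts on the identity component $\mathrm{SO}(2)\simeq\rmS^1$ to inversion, a map of degree $-1$. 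Hence $\pi^0\pi_0\Pi^\bullet\bfC$ is trivial while $\rmZ(\Ho\bfC)\cong\bbZ/2$; the class of a reflection is central in the homotopy category but does not lift even to $\pi_0\Tot_1(\Pi^\bullet\bfC)$. So the step you isolated is precisely where the theorem's last claim breaks down: what the argument genuinely yields is that $E_2^{0,0}$ is the submonoid of $\rmZ(\Ho\bfC)$ consisting of central families whose left and right multiplications are homotopic as maps of mapping spaces, and no amount of care in comparing $\pi_0\Map(X,X)$ with self-maps of $\pi_0X$ will upgrade this to the stated isomorphism without additional hypotheses on $\bfC$.
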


\begin{proof}
Only the last statement requires proof. The~$E_1$ term is given as
\[
E_1^{s,t}=N^s\pi_t\Pi^\bullet\bfC.
\]
In particular, the~$0$-line is
\[
E_1^{0,t}=\pi_t\Pi^0\bfC
\]
with differential given by the difference between the two coface operators~$d_0$ and~$d_1$. This gives
\[
E_2^{0,t}=\pi^0\pi_t\Pi^\bullet\bfC.
\]
For~$t=0$, we have
\[
E_1^{0,0}=\prod_x\pi_0\bfC(x,x),
\] 
and this consists of the families~$\Phi$ homotopy classes~$\Phi_x\colon x\to x$. These lie in the equalizer~$\pi^0\pi_0\Pi^\bullet\bfC$ if and only if~$f\Phi_x$ and~$\Phi_y f$ are homotopic for all~$f\colon x\to y$ in~$\bfC$, and this is the case if and only if~$\Phi$ is in the center~$\rmZ(\Ho\bfC)$ of the homotopy category~$\Ho\bfC$.
\end{proof}

Compared with the general spectral sequence of Bousfield-Kan type, the spectral sequences in Theorem~\ref{thm:ss} have a relatively well-behaved left lower corner. The monoidal structure on~$\Pi^\bullet\bfC$ leads to abelian group structures from the~$E_1$ term on, with the possible exception of~$E_r^{0,0}$. In that spot, our spectral sequence starts with
\[
E_1^{0,0}=\prod_x\pi_0\bfC(x,x),
\] 
which--in the most general case that we consider--is just a monoid. But, the structure~\hbox{$E_2^{0,0}\cong\rmZ(\Ho\bfC)$} on the next term is already an abelian monoid. This is clearly as good as one may hope for in the context of centers. See also the following Section~\ref{sec:groups} that contains a detailed description of these phenomena in a case that displays these difficulties (and only these).

Convergence of Bousfield-Kan type spectral sequences is often a delicate issue, and we do not have to add anything to the original results here. See again~\cite{Bousfield+Kan},~\cite[\S4]{Bousfield}, and~\cite[VI.2]{Goerss+Jardine}. We would only like to point out the following result for truncated situations.

\begin{proposition}
If~$\bfC$ is a locally Kan simplicial category such that its mapping spaces~$\bfC(x,y)$ are~$n$-truncated for an~$n$ that is independent of the objects~$x$ and~$y$, then the spectral sequence of Theorem~\ref{thm:ss} converges completely to its target. 
\end{proposition}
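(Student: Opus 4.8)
The plan is to deduce complete convergence from the observation that, under the truncation hypothesis, the tower of partial totalizations computing $\calZ(\bfC)=\Tot(\Pi^\bullet\bfC)$ is eventually constant. The input is a truncation estimate on the cosimplicial space itself. By hypothesis every mapping space $\bfC(x,y)$ is $n$-truncated, and a cotensor $\Map(K,Y)$ into an $n$-truncated space $Y$ is again $n$-truncated, since the $n$-truncated spaces are closed under products and cotensors. Each
\[
\Pi^s\bfC=\prod_{x_0,\dots,x_s}\Map(\bfC(x_1,x_0)\times\dots\times\bfC(x_s,x_{s-1}),\bfC(x_s,x_0))
\]
is therefore $n$-truncated, so $\pi_t\Pi^s\bfC=0$ for all $t>n$ and all $s$. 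Consequently the cosimplicial abelian group $\pi_t\Pi^\bullet\bfC$ vanishes for $t>n$, and
\[
E_1^{s,t}=N^s\pi_t\Pi^\bullet\bfC=0,\qquad E_2^{s,t}=\pi^s\pi_t\Pi^\bullet\bfC=0\quad(t>n).
\]
The spectral sequence of Theorem~\ref{thm:ss} is thus concentrated in the finite horizontal band $0\leqslant t\leqslant n$.

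Next I would transport this vanishing into the totalization tower. Because $\Pi^\bullet\bfC$ is fibrant by Lemma~\ref{lem:fibrant}, the fiber $F_s$ of the map $\Tot_s(\Pi^\bullet\bfC)\to\Tot_{s-1}(\Pi^\bullet\bfC)$ is a Kan complex whose homotopy is given by the $E_1$ term, $\pi_{t-s}F_s\cong E_1^{s,t}$. For $s>n$ every relevant index $t=s+k$ with $k\geqslant0$ exceeds $n$, so all homotopy groups of $F_s$ vanish and $F_s$ is contractible; hence $\Tot_s(\Pi^\bullet\bfC)\to\Tot_{s-1}(\Pi^\bullet\bfC)$ is a trivial fibration for every $s>n$. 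The tower $\{\Tot_s(\Pi^\bullet\bfC)\}_s$ is therefore constant up to weak equivalence beyond stage $n$, and the natural map
\[
\calZ(\bfC)=\Tot(\Pi^\bullet\bfC)=\lim_s\Tot_s(\Pi^\bullet\bfC)\longrightarrow\Tot_n(\Pi^\bullet\bfC)
\]
is an equivalence.

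Complete convergence then follows formally. For each degree $q$ the tower $\{\pi_q\Tot_s(\Pi^\bullet\bfC)\}_s$ is eventually constant, so it satisfies the Mittag-Leffler condition with vanishing $\lim^1$; the filtration it induces on $\pi_q\calZ(\bfC)\cong\pi_q\Tot_n(\Pi^\bullet\bfC)$ is finite, hence automatically complete and Hausdorff, and the $E_\infty$ terms are its associated graded. These are exactly the requirements for complete convergence, and I would invoke the convergence discussion of Bousfield and Kan for the precise statement, citing~\cite{Bousfield+Kan},~\cite[\S4]{Bousfield}, and~\cite[VI.2]{Goerss+Jardine}.

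The step needing the most care is the fringed corner, where $\pi_0$ and $\pi_1$ are not in general groups and the standard $\lim^1$ short exact sequences are unavailable. Here it is harmless: the contractibility of $F_s$ for $s>n$ is a statement about spaces that uses no additive structure, and because the tower is genuinely constant beyond stage $n$ the filtration on every homotopy group or pointed set in every total degree has only finitely many steps. Thus the fringe never meets a passage to an infinite limit, which is precisely why the truncation hypothesis forces the clean conclusion; the remaining work is simply to check that the fiber identification and the convergence criterion apply verbatim in this fringed setting.
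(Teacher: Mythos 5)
Your overall strategy---confining the $E_1$-term to the band $t\leqslant n$ and then arguing that the $\Tot$-tower stabilizes---is reasonable, and since the paper itself offers no argument beyond an appeal to the standard convergence results of~\cite{Bousfield+Kan},~\cite[\S4]{Bousfield} and~\cite[VI.2]{Goerss+Jardine}, an explicit proof along these lines is welcome. But there is a genuine gap at the fringe, precisely in the step ``all homotopy groups of $F_s$ vanish and $F_s$ is contractible; hence $\Tot_s(\Pi^\bullet\bfC)\to\Tot_{s-1}(\Pi^\bullet\bfC)$ is a trivial fibration for every $s>n$.'' The identification $\pi_{t-s}F_s\cong E_1^{s,t}$ describes the fiber over the \emph{basepoint} only. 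The base $\Tot_{s-1}(\Pi^\bullet\bfC)$ is in general highly disconnected, and a Kan fibration whose basepoint fiber is contractible can still fail to be a weak equivalence: its fibers over other components may be \emph{empty}, i.e.\ the map may fail to be surjective on $\pi_0$. This is not a hypothetical worry; it is exactly what the paper's obstruction theory (Theorem~\ref{thm:ot}) measures. The fiber of $\Tot_{n+1}(\Pi^\bullet\bfC)\to\Tot_n(\Pi^\bullet\bfC)$ over a vertex $\phi$ is a space of lifts of $\partial\Delta^{n+1}\to\Pi^{n+1}\bfC$ against $\Pi^{n+1}\bfC\to M^{n}\Pi^\bullet\bfC$, and the obstruction to its being nonempty lies in (a subquotient of) $\pi_n$ of the fiber of that fibration, i.e.\ in groups of the type $E_2^{n+1,n}=\pi^{n+1}\pi_n\Pi^\bullet\bfC$. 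The truncation hypothesis kills the obstruction groups $E_2^{s,s-1}$ only for $s-1>n$, that is for $s\geqslant n+2$; it does \emph{not} kill the stage-$(n+1)$ obstruction. So your claims that the tower is constant beyond stage $n$ and that $\calZ(\bfC)=\Tot(\Pi^\bullet\bfC)\to\Tot_n(\Pi^\bullet\bfC)$ is an equivalence are unjustified, and they fail whenever such an obstruction is nonzero. Your closing paragraph, which declares the fringe ``harmless,'' dismisses precisely the point where the argument breaks.

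The conclusion survives after an off-by-one correction, and this is how you should patch the proof. For $s\geqslant n+2$, both the obstruction to nonemptiness of a fiber (which lies in $\pi_{s-1}$ of the fiber of $\Pi^s\bfC\to M^{s-1}\Pi^\bullet\bfC$) and the homotopy groups of any nonempty fiber (which are $\pi_{\geqslant s}$ of that same fiber) vanish, because those fibers are $n$-truncated: the levels $\Pi^s\bfC$ are $n$-truncated as you showed, and the matching spaces of a fibrant cosimplicial space with $n$-truncated levels are homotopy limits of $n$-truncated spaces, hence $n$-truncated. Note that this argument must be run over \emph{every} vertex of $\Tot_{s-1}(\Pi^\bullet\bfC)$, not just the basepoint, which is why one needs the lift-space description of the fibers rather than the identification of the basepoint fiber with $\Omega^sN^s\Pi^\bullet\bfC$. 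One then obtains: $\Tot_s(\Pi^\bullet\bfC)\to\Tot_{s-1}(\Pi^\bullet\bfC)$ is a weak equivalence for $s\geqslant n+2$, while $\Tot_{n+1}(\Pi^\bullet\bfC)\to\Tot_n(\Pi^\bullet\bfC)$ is merely the inclusion of a union of components up to homotopy; hence $\calZ(\bfC)\simeq\Tot_{n+1}(\Pi^\bullet\bfC)$, not $\Tot_n(\Pi^\bullet\bfC)$. Eventual constancy from stage $n+1$ onward is all that your final, formal paragraph actually needs: each tower $\{\pi_q\Tot_s(\Pi^\bullet\bfC)\}_s$ is then Mittag-Leffler with vanishing $\lim^1$, the induced filtration on $\pi_q\calZ(\bfC)$ is finite, every $E_r^{s,t}$ stabilizes after finitely many pages because the $E_1$-term lives in the finite triangle $0\leqslant s\leqslant t\leqslant n$, and the complete convergence criterion of~\cite{Bousfield+Kan},~\cite[\S4]{Bousfield},~\cite[VI.2]{Goerss+Jardine} applies.
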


This proposition applies, in particular, to simplicial categories that arise form categories enriched in groupoids ($n=1$).

Before we turn towards the obstruction theory that comes with a fringed Bousfield-Kan type spectral sequence, let us briefly collect the different types of truncations for homotopy coherent centers that we have considered so far. For all~$s\geqslant0$ and~$n=t-s\geqslant0$ there are maps
\begin{center}
  \mbox{ 
    \xymatrix{
\calZ(\bfC)\ar@{=}[d]\ar[r]&\rmP_n\calZ(\bfC)\ar[rr]&&\calZ(\rmP_n\bfC)\\
\Tot(\Pi^\bullet\bfC)\ar[rr]&&\Tot_s(\Pi^\bullet\bfC)
    } 
  }
\end{center}
and from the position of the corresponding regions in our spectral sequence it is clear that one cannot expect these to be equivalences in general.


\subsection*{Obstructions}

The fringed spectral sequence of Theorem~\ref{thm:ss} comes with an obstruction theory that allows the investigation of the edge homomorphism
\[
\pi_0\calZ(\bfC)\longrightarrow\rmZ(\Ho\bfC),
\]
in particular of its image. Note that the range of that map is the~$E_2^{0,0}$ term of the spectral sequence, whereas the domain belongs to the target of that spectral sequence. Therefore, we can start our discussion with an element of~$\rmZ(\Ho\bfC)\cong\pi_0\Tot_1(\Pi^\bullet\bfC)$ and ask whether or not we can lift it to~$\pi_0\Tot_s(\Pi^\bullet\bfC)$ for a given~$s$. We will be able to do this for every~$s$ if and only if the given element lifts to~$\pi_0\Tot(\Pi^\bullet\bfC)=\pi_0\calZ(\bfC)$. 

Since this is by now standard, we will only sketch how the general obstruction theory applies to our specific context and refer to~\cite[\S5 and~\S10]{Bousfield} and~\cite[VIII.4]{Goerss+Jardine} for details. 

First of all the representatives of the classes in~$\pi_0\Tot_s(\Pi^\bullet\bfC)$ can be fairly explicitly described. They are given by~$(s+1)$-tuples~\hbox{$(\Phi_0,\dots,\Phi_s)$} of simplices
\[
\Phi_p\colon\Delta^p\longrightarrow\Pi^p\bfC
\]
that are compatible with the coface and codegeneracy maps whenever these are defined. The maps~\hbox{$(\Phi_0,\dots,\Phi_s)\mapsto(\Phi_0,\dots,\Phi_{s-1})$} and~$(\Phi_0,\dots,\Phi_s)\mapsto\Phi_s$ induce the upper left of the following two pullback squares.
\begin{center}
  \mbox{ 
    \xymatrix{
    \Tot_s(\Pi^\bullet\bfC)\ar[r]\ar[d]&\Map(\Delta^s,\Pi^s\bfC)\ar[d]^f&\\
    \Tot_{s-1}(\Pi^\bullet\bfC)\ar[r]&
    P\ar[r]\ar[d]&\Map(\Delta^s,M^{s-1}\Pi^\bullet\bfC)\ar[d]\\
    &\Map(\partial\Delta^s,\Pi^s\bfC)\ar[r]&\Map(\partial\Delta^s,M^{s-1}\Pi^\bullet\bfC)
    } 
  }
\end{center}
The map~$f$ is the fibration determined by the fibrant cosimplicial space~$\Pi^\bullet\bfC$. Thus, in order to produce a lift of an element in~$\pi_0\Tot_s(\Pi^\bullet\bfC)$, we have to find a lift of its image in the pullback
\[
P=\Map(\partial\Delta^s,\Pi^s\bfC)
\times_{\Map(\partial\Delta^s,M^{s-1}\Pi^\bullet\bfC)}
\Map(\Delta^s,M^{s-1}\Pi^\bullet\bfC)
\]
as illustrated in the following diagram.
\begin{center}
  \mbox{ 
    \xymatrix{
\partial\Delta^s\ar[r]\ar[d]&\Pi^s\bfC\ar[d]\\
\Delta^s\ar[r]\ar@{-->}[ur]&M^{s-1}\Pi^\bullet\bfC
    } 
  }
\end{center}
Clearly, the element in~$\pi_{s-1}(\Pi^s\bfC)$ represented by~$\partial\Delta^s\to\Pi^s\bfC$ is an obstruction to the existence of such a lift. In fact, this obstruction lives on the~$E_1$ term, more precisely in~$E_1^{s,s-1}$. It turns out that we can pass to the~$E_2$ term to obtain a well-defined obstruction in~$E_2^{s,s-1}$ to the liftability of the restriction in~$\pi_0\Tot_{s-1}(\Pi^\bullet\bfC)$ of our element in~$\pi_0\Tot_s(\Pi^\bullet\bfC)$ to~$\pi_0\Tot_{s+1}(\Pi^\bullet\bfC)$.

\begin{remark}\label{rem:pi1}
A little extra care has to be taken when there are fundamental groups involved. It is common to assume that Whitehead products vanish in the spaces involved in~$\Pi^\bullet\bfC$, or at least that the fundamental groupoids act trivially on all fundamental groups. Both hypotheses are clearly satisfied in the linear case, when the mapping spaces are simplicial abelian groups, or in the stable case, when the mapping spaces are infinite loop spaces.
\end{remark}

The following result summarizes the situation.

\begin{theorem}\label{thm:ot}
Let~$\bfC$ be a simplicial category that is locally Kan and satisfies a hypothesis on the fundamental groups as in Remark~\ref{rem:pi1} above. Then any element in the center
\[
\rmZ(\Ho\bfC)=\Tot_1(\Pi^\bullet\bfC)
\]
of the homotopy category can be lifted to an element in the homotopy coherent center
\[
\calZ(\bfC)=\Tot(\Pi^\bullet\bfC)
\] 
if and only of if the obstruction classes in
\[
E_2^{s,s-1}=\pi^s\pi_{s-1}\Pi^\bullet\bfC
\]
vanish for all~$s\geqslant2$.
\end{theorem}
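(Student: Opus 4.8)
The plan is to reduce the statement to the standard Bousfield--Kan obstruction theory for the tower of partial totalizations, as developed in~\cite[\S5 and~\S10]{Bousfield} and~\cite[VIII.4]{Goerss+Jardine}, and to identify the resulting obstruction groups with the spectral sequence entries~$E_2^{s,s-1}$. Since~$\Pi^\bullet\bfC$ is pointed and fibrant by Lemma~\ref{lem:fibrant}, the tower
\[
\cdots\longrightarrow\Tot_s(\Pi^\bullet\bfC)\longrightarrow\Tot_{s-1}(\Pi^\bullet\bfC)\longrightarrow\cdots\longrightarrow\Tot_1(\Pi^\bullet\bfC)
\]
is a tower of fibrations whose inverse limit is~$\calZ(\bfC)=\Tot(\Pi^\bullet\bfC)$. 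Lifting a given class in~$\rmZ(\Ho\bfC)=\pi_0\Tot_1(\Pi^\bullet\bfC)$ to~$\pi_0\calZ(\bfC)$ therefore amounts to lifting it through each stage of this tower and then assembling the compatible finite lifts into an element of the limit. First I would treat the single step~$\Tot_s\to\Tot_{s-1}$: by the two pullback squares displayed above, producing a lift of a representative~$(\Phi_0,\dots,\Phi_{s-1})$ is precisely the problem of extending the map~$\partial\Delta^s\to\Pi^s\bfC$ over~$\Delta^s$, compatibly with its prescribed image in the matching space~$M^{s-1}\Pi^\bullet\bfC$.

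Next I would extract the obstruction. Because~$\Pi^s\bfC\to M^{s-1}\Pi^\bullet\bfC$ is a fibration (again Lemma~\ref{lem:fibrant}), the relative extension problem over~$(\Delta^s,\partial\Delta^s)$ is governed by the fibre~$F^s$, whose relevant homotopy group is the normalized cochain~$\pi_{s-1}F^s=N^s\pi_{s-1}\Pi^\bullet\bfC=E_1^{s,s-1}$. The attaching map~$\partial\Delta^s\simeq S^{s-1}\to F^s$ thus represents the primary obstruction as an element of~$E_1^{s,s-1}$, and the extension exists for this representative exactly when that element is null. Passing from~$E_1$ to~$E_2$ accounts for the freedom to modify the chosen lift at the previous stage: such a modification alters the~$E_1$-obstruction by a coboundary, while the obstruction is automatically a cocycle, so that the genuinely invariant obstruction is the class in~$E_2^{s,s-1}=\pi^s\pi_{s-1}\Pi^\bullet\bfC$, and it vanishes if and only if the lift can be carried one stage further after an admissible adjustment. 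With the obstructions in hand, the two implications follow by a routine induction on~$s\geqslant2$: vanishing of all~$E_2^{s,s-1}$ lets one inductively produce compatible lifts up the whole tower, whereas an existing lift in~$\calZ(\bfC)$ restricts to lifts at every stage and hence forces every obstruction to vanish.

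The step I expect to be the main obstacle is the descent of the obstruction from~$E_1^{s,s-1}$ to a \emph{well-defined} class in~$E_2^{s,s-1}$ and the verification that its vanishing is \emph{equivalent} to liftability, rather than merely necessary. This is exactly where the hypothesis of Remark~\ref{rem:pi1} enters: near the fringe of a Bousfield--Kan type spectral sequence the relevant sets need carry no group structure and the fundamental groupoids may act nontrivially on higher homotopy, which would wreck both the additivity of obstructions and the identification of the indeterminacy with the image of a differential. Assuming that Whitehead products vanish, or at least that~$\pi_1$ acts trivially on all~$\pi_t$, restores abelian group structures on the~$E_1$-terms and trivializes these actions, so that the coboundary/cocycle bookkeeping is literally that of the normalized cochain complex computing~$\pi^s\pi_{s-1}\Pi^\bullet\bfC$. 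A secondary point to address is the passage from the finite stages to the limit, where in principle a~$\lim^1$ term could obstruct assembling compatible lifts into an honest element of~$\pi_0\calZ(\bfC)$; this is controlled by the convergence results for Bousfield--Kan type spectral sequences recalled after Theorem~\ref{thm:ss}, and is eased by the abelian group structures that the multiplicative structure supplies from the~$E_1$-term onward.
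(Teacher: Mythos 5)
Your proposal is correct and follows essentially the same route as the paper, which likewise only sketches the standard Bousfield--Kan obstruction theory for the tower of partial totalizations (same pullback squares, same identification of the obstruction first in $E_1^{s,s-1}$ and then, after accounting for modifications of the previous lift, as a well-defined class in $E_2^{s,s-1}$, same references and same role for the hypothesis of Remark~\ref{rem:pi1}). Your final worry about a $\lim^1$ term is harmless: for a tower of fibrations of Kan complexes the map $\pi_0\Tot(\Pi^\bullet\bfC)\to\lim_s\pi_0\Tot_s(\Pi^\bullet\bfC)$ is surjective, so $\lim^1$ only affects uniqueness of lifts, not the existence statement of the theorem.
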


We note that the obstruction classes certainly vanish if the obstruction groups are all trivial. This happens, of course, in the case when~$\bfC$ is homotopically discrete.


\section{The category of finite groups}\label{sec:groups}

In this section we present a detailed discussion of the various notions of centers for the category of (discrete) groups. Of course, a size limitation needs to be chosen, and we can assume that all groups under consideration will be finite. Let~$\bfB_0$ denote a skeleton of the category of all such groups. This is the underlying category of a simplicial category~$\bfB$, where the mapping space~$\bfB(G,H)$ is the nerve of the groupoid of self-maps~$G\to H$. The objects in this groupoid are the homomorphisms~$G\to H$, and the morphisms between two homomorphisms are the elements~$h$ in~$H$ that conjugate one into the other. It is customary to denote the conjugacy classes of homomorphisms by
\[
\Rep(G,H)=\pi_0\bfB(G,H).
\]
The automorphism group of~$\alpha\colon G \to H$ is the centralizer
\begin{equation}\label{eq:Zalpha}
\rmC(\alpha)=\pi_1(\bfB(G,H),\alpha)
\end{equation}
of~$\alpha$. We remark that the centralizers need not be abelian. For example, the centralizer of the constant homomorphism~$G\to G$ is the entire group~$G$, whereas the center of~$G$ reappears as the centralizer of the identity~$G\to G$. These observations will be useful when we will determine the homotopy coherent center of~$\bfB$. Before we do so, let us first describe the centers of the ordinary categories~$\bfB_0$ and~$\Ho\bfB$. 

The homotopy category of groups is sometimes called the category of bands in accordance with its use in the non-abelian cohomology and theory of gerbes, see~\cite[IV.1]{Giraud}, where the notation~$\Hex(G,H)$ is used instead of our~$\Rep(G,H)$.

\begin{proposition}\label{prop:ordinary_groups}
The centers of the categories~$\bfB_0$ and~$\Ho\bfB$ are isomorphic to the abelian monoid~$\{0,1\}$ under multiplication.
\end{proposition}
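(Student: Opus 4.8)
The plan is to show that the only two natural families are the identity and the trivial family, and that these reproduce $\{0,1\}$. Write $1$ for the identity natural transformation $\Phi_G=\id_G$ and $0$ for the family of trivial endomorphisms $\Phi_G\colon G\to G$ sending everything to the unit. Both are natural: for the trivial family, $f\circ 0_G$ and $0_H\circ f$ are both the trivial homomorphism $G\to H$. Under composition in the center one has $1\cdot 1=1$, $1\cdot 0=0$, and $0\cdot 0=0$, so $\{1,0\}$ is a submonoid isomorphic to $(\{0,1\},\cdot)$; on any nontrivial group the two families differ, so this submonoid has two elements. These families lie in both $\rmZ(\bfB_0)$ and $\rmZ(\Ho\bfB)$, and the quotient functor $\bfB_0\to\Ho\bfB$ carries one to the other, so it remains only to prove that neither center contains any further element.

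First I would reduce an arbitrary central $\Phi$ to a single exponent system. Restricting to the cyclic groups, $\Phi_{\bbZ/n}$ is multiplication by some $a_n\in\bbZ/n$, and naturality along the inclusions and projections relating $\bbZ/d$ to $\bbZ/n$ for $d\mid n$ forces $a_n\equiv a_d\pmod d$. For any element $g$ of order $n$ in any group $G$, naturality along the homomorphism $\bbZ/n\to G$ with $1\mapsto g$ then gives $\Phi_G(g)=g^{a_n}$ in $\bfB_0$. Thus $\Phi$ is entirely determined by the compatible family $(a_n)$.

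The engine for pinning down $(a_n)$ is the symmetric groups. Every transposition $t\in S_m$ has order $2$, so $\Phi_{S_m}(t)=t^{a_2}$, which equals $e$ when $a_2\equiv 0$ and $t$ when $a_2\equiv 1$. Since the transpositions generate $S_m$, in the first case $\Phi_{S_m}$ kills a generating set and so is trivial, while in the second it fixes a generating set and so equals $\id_{S_m}$. Evaluating on an $m$-cycle then forces $a_m\equiv 0\pmod m$ for all $m$ in the first case and $a_m\equiv 1\pmod m$ in the second; hence $\Phi_G(g)=g^0=e$ for all $g$ (so $\Phi=0$) or $\Phi_G(g)=g$ for all $g$ (so $\Phi=1$). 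This settles the case of $\bfB_0$.

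For $\Ho\bfB$ the same scheme runs, but naturality now only holds up to conjugacy, which is where the real work lies. When $a_2\equiv 0$ the argument survives unchanged, since an element conjugate to $e$ equals $e$; transpositions still die, $\Phi_{S_m}$ is trivial, and one concludes $[\Phi_G]=[0]$. The hard case is $a_2\equiv 1$, where one only learns that $\Phi_{S_m}$ sends each transposition to some transposition. Two difficulties appear. First, one must exclude the sign-type collapses $S_m\twoheadrightarrow\bbZ/2\hookrightarrow S_m$, which also send transpositions to transpositions; I would do this by showing $a_m\equiv 1\pmod m$ for every $m$, using naturality against the non-abelian affine groups $\bbZ/p\rtimes\bbZ/q$ with $q\mid p-1$, whose defining relation $yxy^{-1}=x^r$ forces $a_q\equiv 1$ once $a_p\not\equiv 0$, thereby ruling out any $a_m\equiv 0$. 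Second, and this is the main obstacle, one must upgrade the pointwise information $\Phi_G(g)\sim g$ to the global statement that $\Phi_G$ represents $[\id]$, that is, that $\Phi_G$ is an inner automorphism. This step genuinely requires the naturality of $\Phi$ over the whole category of finite groups: being class-preserving on a single group does not imply inner, since there exist non-inner class-preserving automorphisms, so it is precisely the interaction of the conjugacy-class data across all homomorphisms that must be exploited to conclude $[\Phi_G]=[\id]$ for every $G$.
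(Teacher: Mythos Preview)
Your argument for $\bfB_0$ is correct and is essentially the paper's, executed more explicitly via transpositions. Your case $a_2\equiv 0$ for $\Ho\bfB$ is also fine.

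The gap is in the case $a_2\equiv 1$ for $\Ho\bfB$, and it is real. First, your affine-group step is shakier than you suggest: the relation in $\bbZ/p\rtimes\bbZ/q$ does yield $a_q\equiv 1$ from $a_p\not\equiv 0$, but the bootstrap you need runs the other way (from $a_2\equiv 1$ toward $a_p\not\equiv 0$ for odd primes $p$), and dihedral groups $\bbZ/p\rtimes\bbZ/2$ do not supply this---the sign-type collapse survives there just as in $S_3$. More importantly, your difficulty (2) is left entirely open: you correctly observe that a class-preserving endomorphism of a single group need not be inner, and then gesture at ``the interaction of the conjugacy-class data across all homomorphisms'' without providing any mechanism. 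That is the missing idea, not a detail to be filled in later.

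The paper takes a different route that sidesteps difficulty (2). Instead of splitting on $a_2$ and trying to prove $[\Phi]=[1]$ directly, it works with the full profinite integer $n$ and, for each $n\neq 0,1$, exhibits a single finite group admitting \emph{no} endomorphism of conjugacy type $n$ whatsoever. For $|n|\geqslant 2$ it uses the normal subgroup of $S_m$ generated by elements of order $n$ (which is $A_m$ or $S_m$); for $n=-1$ it brings in an ingredient your proposal did not anticipate: the existence of finite groups of odd order with trivial center and trivial outer automorphism group, on which any conjugacy-type-$(-1)$ endomorphism would have to be inner and hence force every element to be conjugate to its inverse. (It is fair to note that the paper, like your proposal, does not spell out why the fibre over $n=1$ contains only the identity; both treatments leave this point implicit.)
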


\begin{proof}
This is straightforward for the category~$\bfB_0$ of groups and homomorphisms. An element in the center thereof is a family~$\Phi=(\Phi_G\colon G\to G)$ of homomorphisms that are natural in~$G$. We can evaluate~$\Phi$ on the full subcategory of cyclic groups, and since~$\Hom(\bbZ/k,\bbZ/k)\cong\bbZ/k$, we see that it is determined by a profinite integer~$n$ in~$\widehat\bbZ$: we must have~$\Phi_G(g)=g^n$ for all groups~$G$ and all its elements~$g$. But, if~$n$ is not~$0$ or~$1$, then there are clearly groups for which that map is not a homomorphism. In fact, we can take symmetric or alternating groups, as we will see below.

Let us move on to the center of the homotopy category~$\Ho\bfB$. Again, the homomorphisms~$g\mapsto g^0$ and~$g\mapsto g^1$ are in the center, and they still represent different elements, since they are not conjugate. In the homotopy category, if~$[\Phi]=([\Phi_G]\colon G\to G)$ is an element in the center, testing against the cyclic groups only shows that there is a profinite integer~$n$ such that~$\Phi_G(g)$ is conjugate to~$g^n$ for each group~$G$ and each of its elements~$g$. We will argue that no such family of homomorphisms~$\Phi_G$ exists unless~$n$ is~$0$ or~$1$.

Let us call an endomorphisms~$\alpha\colon G\to G$ {\it of conjugacy type~$n$} if~$\alpha(g)$ is conjugate to~$g^n$ for all~$g$ in~$G$. We need to show that for all~$n$ different from~$0$ and~$1$ there exists at least one group~$G$ that does not admit an endomorphism of conjugacy type~$n$.

If~$|n|\geqslant2$, then we choose~$m\geqslant\max\{n,5\}$ and consider the subgroup~$G$ of the symmetric group~$S_m$ generated by the elements of order~$n$. Since the set of generators is invariant under conjugation, this subgroups is normal, and it follows that~$G=A_m$~(the  subgroup of alternating permutations) or~$G=S_m$. An endomorphisms~$\alpha\colon G\to G$ of conjugacy type~$n$ would have to be trivial because it vanishes on the generators. But then~$g^n$ would be trivial for all~$g$ in~$A_m$, which is absurd.

If~$n=-1$, then we first note that an endomorphisms~$\alpha\colon G\to G$ of conjugacy type~$-1$ is automatically injective. Hence, if~$G$ is finite, then it is an automorphism. Therefore we choose a nontrivial finite group~$G$ of odd order such that its outermorphism group is trivial.~(Such groups exist, see~\cite{Horosevskii},~\cite{Dark} or~\cite{Heineken} for examples that also have trivial centers.) If~$\alpha\colon G\to G$ were an endomorphisms of conjugacy type~$-1$, then this would be an inner automorphism by the assumption on~$g$. Then~$\id\colon G\to G$, which represents the same homotopy class, would also be an endomorphisms  of conjugacy type~$-1$. In other words, every element~$g$ would be conjugate to its inverse~$g^{-1}$, a contradiction since the order of~$G$ is odd.
\end{proof}

We are now ready to apply the obstruction theory and spectral sequence to determine the homotopy coherent center of the category of groups.

\begin{proposition}
The maps
\[
\rmZ(\bfB_0)\longrightarrow\pi_0\calZ(\bfB)\longrightarrow\rmZ(\Ho\bfB)
\]
are both isomorphisms, and the component of the identity in~$\calZ(\bfB)$ is contractible.
\end{proposition}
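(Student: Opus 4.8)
The strategy is to run the fringed spectral sequence of Theorem~\ref{thm:ss} together with the obstruction theory of Theorem~\ref{thm:ot}, exploiting that all the mapping spaces of~$\bfB$ are nerves of groupoids and hence $1$-types. My first step is to record that this $1$-truncation propagates through the construction of~$\Pi^\bullet\bfB$: a finite product of (disjoint unions of) $K(\pi,1)$'s is again a $1$-type, and the mapping space from such a space into another $1$-type is once more a $1$-type. Consequently~$\pi_t\Pi^\bullet\bfB=0$ for all~$t\geqslant2$, the spectral sequence is concentrated in the two rows~$t=0$ and~$t=1$, and (since~$\bfB$ is enriched in groupoids) it converges completely to~$\pi_{t-s}\calZ(\bfB)$. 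Because there is no lattice point~$(s,t)$ with~$t\leqslant1$ and~$t-s=n$ for~$n\geqslant2$, this already forces~$\pi_n\calZ(\bfB)=0$ for all~$n\geqslant2$; in particular~$\calZ(\bfB)$ is a $1$-type and each of its components is aspherical.

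Next I would treat the identity component. The only lattice point contributing to~$\pi_1\calZ(\bfB)$ is~$(s,t)=(0,1)$, and since its spectral-sequence neighbours vanish for row reasons there are no differentials, so~$\pi_1(\calZ(\bfB),\id)\cong E_2^{0,1}=\pi^0\pi_1\Pi^\bullet\bfB$ at the identity basepoint. Now~$\pi_1\Pi^0\bfB=\prod_G\pi_1(\bfB(G,G),\id_G)=\prod_G\rmC(\id_G)=\prod_G\rmZ(G)$ by~\eqref{eq:Zalpha}, and the equalizer~$\pi^0$ is cut out by the two coface maps, which induce on~$\pi_1$ the two families~$f\mapsto f(\zeta_H)$ and~$f\mapsto\zeta_G$ of loops over a homomorphism~$f\colon H\to G$. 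Testing this condition against the trivial homomorphism~$G\to G$ — exactly the device used in Proposition~\ref{prop:ordinary_groups} — forces~$\zeta_G=1$ for every~$G$. Hence~$\pi_1(\calZ(\bfB),\id)=0$, and the identity component, being an aspherical space with trivial fundamental group, is contractible.

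For the two maps on~$\pi_0$ I would first invoke Proposition~\ref{prop:ordinary_groups}: both~$\rmZ(\bfB_0)$ and~$\rmZ(\Ho\bfB)$ are the monoid~$\{0,1\}$, and the composite~$\rmZ(\bfB_0)\to\rmZ(\Ho\bfB)$ sends the trivial and the identity families to their (distinct) homotopy classes, hence is a bijection. It therefore suffices to prove that the right-hand map~$g\colon\pi_0\calZ(\bfB)\to\rmZ(\Ho\bfB)$ is injective, for then~$g$ is bijective and the left-hand map~$\rmZ(\bfB_0)\to\pi_0\calZ(\bfB)$ is bijective as well. Surjectivity of~$g$ is immediate, since the classes~$0$ and~$1$ are realized by the strict natural transformations lying in~$\rmZ(\bfB_0)$. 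In the two-row spectral sequence the ambiguity in lifting a class of~$\rmZ(\Ho\bfB)=\pi_0\Tot_1(\Pi^\bullet\bfB)$ to~$\pi_0\calZ(\bfB)=\pi_0\Tot(\Pi^\bullet\bfB)$ is governed by the single fringe term~$E_\infty^{1,1}=E_2^{1,1}=\pi^1\pi_1\Pi^\bullet\bfB$ (all higher~$E^{s,s}$ vanish for row reasons). Injectivity of~$g$ thus reduces to the vanishing of this first cohomotopy group at each of the two basepoints.

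I expect this last vanishing to be the main obstacle, since~$\pi^1\pi_1\Pi^\bullet\bfB$ is the first (nonabelian) cohomotopy of the cosimplicial group built from the centralizers~$\rmC(\id_G)=\rmZ(G)$, and its triviality is not formal. The cleanest route is probably to reinterpret the statement bicategorically: because~$\bfB$ is enriched in groupoids, a vertex of~$\calZ(\bfB)$ is a pseudonatural self-transformation of the identity — a family of endomorphisms~$\eta_G$ together with coherent conjugating~$2$-cells~$c_f$ for every homomorphism~$f$ — and two such vertices map to the same class in~$\rmZ(\Ho\bfB)$ precisely when their underlying conjugacy types agree. What must be shown is that a pseudonatural transformation of conjugacy type~$1$ (respectively~$0$) is isomorphic, through a modification, to the strict identity (respectively the strict trivial transformation). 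Here the coherence~$2$-cells~$c_f$ ranging over \emph{all} homomorphisms~$f$ should rigidify the a priori merely class-preserving data to an inner one, and I anticipate that closing this gap requires feeding in the same group-theoretic inputs as Proposition~\ref{prop:ordinary_groups}: symmetric and alternating groups to handle~$|n|\geqslant2$, and nontrivial odd-order groups with trivial outer automorphism group and trivial center to handle~$n=-1$. Assembling these into a proof that every such transformation is modification-equivalent to a strict one is the technical heart of the argument.
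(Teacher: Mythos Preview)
Your spectral-sequence setup matches the paper's: the $1$-truncation of the mapping spaces confines everything to the rows $t\in\{0,1\}$, your computation of $E_2^{0,1}=0$ by testing against a constant homomorphism is exactly the paper's, and the reduction of injectivity of $\pi_0\calZ(\bfB)\to\rmZ(\Ho\bfB)$ to the vanishing of $E_2^{1,1}$ is correct.

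The genuine gap is your last paragraph. You do not actually compute $E_2^{1,1}$; you speculate that its vanishing will require a bicategorical rigidification argument together with the special group-theoretic inputs of Proposition~\ref{prop:ordinary_groups} (symmetric and alternating groups, odd-order complete groups). This overestimates the difficulty badly. First, the cosimplicial group in question is \emph{abelian}, not nonabelian as you write: one has
\[
\pi_1(\Map(\bfB(G,H),\bfB(G,H)),\id)\cong\prod_{[\alpha]\in\Rep(G,H)}\rmZ\rmC(\alpha),
\]
a product of centers of centralizers, so $E_2^{1,1}$ is ordinary first cohomology of a cosimplicial abelian group. Second, the paper kills it using nothing beyond the existence of the trivial group as an initial object. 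Writing $(\delta\Phi)(\alpha)=\Phi(H)-\alpha(\Phi(G))$ for $\alpha\colon G\to H$ and the cocycle condition as $\Psi(\gamma\beta)=\Psi(\gamma)+\gamma(\Psi(\beta))$, one evaluates a given $1$-cocycle $\Psi$ at the unique map $\epsilon_F\colon 1\to F$ to obtain a $0$-cochain $\Phi(F)=\Psi(\epsilon_F)$; specializing the cocycle identity to $\gamma=\alpha$ and $\beta=\epsilon_G$ (so that $\gamma\beta=\epsilon_H$) then gives $\Psi(\alpha)=\Phi(H)-\alpha(\Phi(G))=(\delta\Phi)(\alpha)$. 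Every cocycle is a coboundary, and $E_2^{1,1}=0$. No bicategorical machinery and none of the delicate group theory from Proposition~\ref{prop:ordinary_groups} is needed at this stage; that proposition has already done its work in identifying $E_2^{0,0}$.
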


\begin{proof}
Since the mapping spaces in~$\bfB$ are nerves of groupoids, they are~$1$-truncated: except for the fundamental groups, the higher homotopy groups vanish. This implies that the spectral sequence can be nontrivial only in the three spots~$E^{s,t}$ with~$t\leqslant 1$. This implies convergence (in fact, we have~$E_2=E_\infty$) and dramatically reduces the efforts needed to understand the target. However, these three entries are also the hardest to handle because {\it a priori} they need not be abelian groups.

Let us first deal with the one entry that has~$t=0$. We already know that there is an isomorphism~\hbox{$E^{0,0}_2\cong\rmZ(\Ho\bfB)$} by the general structure of the~$E_2$ term given in Theorem~\ref{thm:ss}. The center of the homotopy category has been determined in the preceding Proposition~\ref{prop:ordinary_groups}. That result also makes it clear that all elements in the center of the homotopy category lift to the homotopy coherent center; they even lift to the center of the underlying category~$\bfB_0$. We deduce that the obstructions vanish.

Let us now deal with the two entries that have~$t=1$ and that determine the kernel of the map~$\calZ(\bfB)\to\rmZ(\Ho\bfB)$ and the fundamental group of~$\calZ(\bfB)$ at the identity element. We have
\[
E^{0,1}_1=\prod_F\pi_1(\bfB(F,F),\id)\cong\prod_F\rmZ(F)
\]
by~\eqref{eq:Zalpha}, and we record that this is an abelian group. 

In order to determine~$E^{1,1}_1$, we start with 
\[
\pi_1\Pi^1\bfB\cong\prod_{G,H}\pi_1(\Map(\bfB(G,H),\bfB(G,H)),\id).
\]
Now we recall that~$\bfB(G,H)$ is the disjoint union of the classifying spaces for~$\rmC(\alpha)$,
where~$\alpha$ runs through a system of representatives of~$\Rep(G,H)$ in~$\Hom(G,H)$. Since we are considering loops based at the identity, we get
\[
\pi_1(\Map(\bfB(G,H),\bfB(G,H)),\id)
\cong\prod_{[\alpha]\in\Rep(G,H)}\rmZ\rmC(\alpha),
\]
the product of the centers of the centralizers. We note again that this is an abelian group. This leaves us with 
\[
\pi_1\Pi^1\bfB\cong\prod_{G,H,[\alpha]\in\Rep(G,H)}\rmZ\rmC(\alpha).
\]
The codegeneracy homomorphism is the evaluation at the identities. Consequently, we end up with
\begin{equation}\label{eq:cocycles}
E^{1,1}_1\cong\prod_{G,H,[\alpha]\not=\id}\rmZ\rmC(\alpha),
\end{equation}
the subgroup of normalized cochains.

The differential~$\delta\colon E^{0,1}_1\to E^{1,1}_1$ is the difference of the two coface homomorphisms. Therefore, up to an irrelevant sign, it is given on a family~$\Phi=(\Phi(F)\in\rmZ(F)\,|\,F)$ by
\begin{equation}\label{eq:diff1}
(\delta\Phi)(\alpha)=\Phi(H)-\alpha(\Phi(G))\in\rmZ\rmC(\alpha)
\end{equation}
in the factor of~$\alpha\colon G\to H$. It follows that the~$E^{0,1}_2$ entry consist of those families~$\Phi$ such that~$\Phi(H)=\alpha(\Phi(G))$ for all~$G$,~$H$, and~$\alpha\colon G\to H$. Taking~$\alpha$ to be constant, we see that~$\Phi(F)$ has to be trivial for all~$F$. This shows
\[
E^{0,1}_2=0.
\]
Since~$E^{s,t}_2=0$ for all other~$s,t$ such that~$t-s=1$, we deduce that~$\pi_1(\calZ(\bfB),\id)$ is trivial, so that the component is contractible. It remains to be shown that there are no more components than we already know, and these are indexed by~$E^{1,1}_2$.

The group~$E^{1,1}_2$ is isomorphic to the~$1$-cocyles in~\eqref{eq:cocycles}, those normalized~$1$-cochains on which the alternating sum of the coface maps vanishes. These coface maps sends a normalized~$1$-cochain~\hbox{$\Psi=(\Psi(\alpha)\in\rmZ\rmC(\alpha)\,|\,\alpha)$} to the families of
\[
\Psi(\gamma), \Psi({\gamma\beta}), \gamma(\Psi(\beta))\in\rmZ\rmC(\gamma)\leqslant N,
\]
respectively. These families are indexed by the composable pairs~$L\overset{\beta}{\to}M\overset{\gamma}{\to}N$ each time. Since the elements in~$\rmZ\rmC(\gamma)$ commute, we can write
\[
(\delta\Psi)(\gamma,\beta)=\Psi(\gamma)-\Psi(\gamma\beta)+\gamma(\Psi(\beta)),
\]
again up to an irrelevant sign. We claim that each element in the kernel of this~$\delta$ is already in the image of the previous~$\delta$ described in \eqref{eq:diff1}.

To prove this, let us be given a family~$\Psi=(\Psi(\alpha))$ such that
\begin{equation}\label{eq:cocycle}
\Psi(\gamma\beta)=\Psi(\gamma)+\gamma(\Psi(\beta)).
\end{equation} 
We can evaluate this family at the unique homomorphisms~$\alpha=\epsilon_F$ from the trivial group to~$F$, for each~$F$, to obtain the family~$\Phi(F)=\Psi(\epsilon_F)$ that is our candidate for an element~$\Phi$ to hit~$\Psi$. And indeed, equation~\eqref{eq:cocycle} for~$\alpha=\gamma$ and~$\beta=\epsilon_G$ gives
\[
\Psi(\epsilon_H)=\Psi(\alpha\epsilon_G)=\Psi(\alpha)+\alpha(\Psi(\epsilon_G)).
\]
Rearranging this yields the identity
\[
\Psi(\alpha)=\Psi(\epsilon_H)-\alpha(\Psi(\epsilon_G))=\Phi(H)-\alpha(\Phi(G))=(\delta\Phi)(\alpha),
\]
and this shows that~$\Psi$ is indeed in the image. We have proved the claim, so that we now know~$E^{1,1}_2=0$, and this finishes the proof. 
\end{proof}

It seems reasonable to expect that similar arguments will determine the homotopy coherent centers of related categories such as the category of groupoids, the category of~$1$-truncated spaces, etc. This will not be pursued further here. Instead, we will now turn our attentions towards a class of examples that indicates the wealth of obstructions and nontrivial differentials that one can expect in general.


\section{Groupoids}\label{sec:groupoids}

We will now discuss another class of examples of simplicial categories, namely simplicial groupoids. By definition, a simplicial groupoid~$\bfG$ is a simplicial category such that the categories~$\bfG_n$ of~$n$-simplices are groupoids. For example, simplicial groups, as discussed already in Section~\ref{sec:monoids}, are simplicial groupoids. Here, we are interested mainly in the case where there are many objects. Let us start by reviewing the discrete case first.

\subsection*{Fundamental groupoids}

If~$X$ is a space, its fundamental groupoid~$\Pi_1X$ is a (discrete) groupoid, and up to equivalence, every discrete groupoid has this form: Take~$X$ to be its classifying space.

\begin{proposition}\label{prop:center_PiX}
For every space~$X$, the center of the fundamental groupoid splits as a product 
\[
\rmZ(\Pi_1 X)\cong\prod_{[x]\in\pi_0X}\rmZ(\pi_1(X,x))
\]
of centers of its fundamental groups over a set of representatives of its path components.
\end{proposition}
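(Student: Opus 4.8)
The plan is to reduce the statement about the center of the fundamental groupoid to the well-known fact that a groupoid is equivalent to a disjoint union of its vertex groups, and then to observe that the center (as the endomorphism monoid of the identity functor) is invariant under equivalence of categories and turns disjoint unions into products. Concretely, I would first recall that for a space~$X$ there is an equivalence of groupoids
\[
\Pi_1 X\simeq\coprod_{[x]\in\pi_0 X}\pi_1(X,x),
\]
where each fundamental group~$\pi_1(X,x)$ is regarded as a one-object groupoid and the coproduct is taken over a chosen set of representatives of the path components. This is the standard skeleton of the fundamental groupoid: pick a base point in each component, and a path from each point to its chosen base point, to build the equivalence.

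The second step is to record two general facts about centers of ordinary categories. First, the center~$\rmZ(\bfC)$, being the endomorphism monoid of the identity functor, is invariant under equivalences of categories: an equivalence~$\bfC\to\bfD$ induces an isomorphism~$\rmZ(\bfC)\cong\rmZ(\bfD)$, since natural transformations of the identity correspond bijectively under the equivalence. Second, for a coproduct (disjoint union) of categories the center splits as a product,
\[
\rmZ\Bigl(\coprod_i\bfC_i\Bigr)\cong\prod_i\rmZ(\bfC_i),
\]
because a natural endomorphism of the identity on a disjoint union is exactly a family of natural endomorphisms of the identity on each summand (there are no morphisms between distinct summands to impose compatibility). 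Combining these with the first step immediately yields
\[
\rmZ(\Pi_1 X)\cong\rmZ\Bigl(\coprod_{[x]}\pi_1(X,x)\Bigr)\cong\prod_{[x]}\rmZ(\pi_1(X,x)).
\]

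The final step is to identify the center of a one-object groupoid with the ordinary center of its vertex group. For a group~$\Gamma$ viewed as a one-object category, a natural transformation of the identity functor is a single element~$c\in\Gamma$ satisfying~$c\gamma=\gamma c$ for all~$\gamma\in\Gamma$, which is precisely an element of the group-theoretic center~$\rmZ(\Gamma)$; this is the Eckmann--Hilton observation already invoked in the introduction. Applying this to each~$\pi_1(X,x)$ gives~$\rmZ(\pi_1(X,x))$ as the group-theoretic center, completing the identification.

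I do not expect any serious obstacle here, as each ingredient is standard; the only point requiring mild care is checking that the chosen equivalence~$\Pi_1 X\simeq\coprod_{[x]}\pi_1(X,x)$ and the invariance of centers under equivalence fit together so that the resulting isomorphism is the natural componentwise one (and not merely an abstract isomorphism). I would therefore make the equivalence explicit enough that the induced map on centers is visibly the product of the maps~$\rmZ(\Pi_1 X)\to\rmZ(\pi_1(X,x))$ given by restriction to the object~$x$, so that the splitting is canonical rather than depending on the auxiliary choices of base points and paths.
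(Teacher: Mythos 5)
Your proof is correct and takes essentially the same route as the paper: the paper's own (two-sentence) proof likewise handles the connected case via the equivalence of $\Pi_1 X$ with a fundamental group $\pi_1(X,x)$ (using invariance of centers under equivalence) and the general case via compatibility of centers with disjoint unions. Your write-up simply makes these standard ingredients---skeleton, equivalence-invariance, coproduct-to-product, and the one-object identification---explicit, including the useful observation that the resulting isomorphism is the canonical one given by restriction to the objects $x$.
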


\begin{proof}
This is certainly true for path connected spaces~$X$, since in that case the fundamental groupoid is equivalent to the fundamental group of any of its points. The general case follows from the compatibility of centers with disjoint unions.
\end{proof}

After this review of the discrete case, let us now move on to simplicial groupoids.


\subsection*{Loop groupoids}

While the fundamental groupoid~$\Pi_1 X$ of a space~$X$ is useful for many purposes, the passage to homotopy classes is a rather drastic simplification of the situation. It is preferable to work with the loop groupoid~$\bfG X$ of~$X$ that has been introduced by Dwyer and Kan in~\cite{Dwyer+Kan:groupoids}. It is a simplicial groupoid that can be thought of as a simplicially enriched refinement of the fundamental groupoid of the space~$X$. In fact, in the cited paper it is shown that the homotopy theory of simplicial groupoids is equivalent to the homotopy theory of spaces. More precisely, inverse equivalences are given by the pair of adjoint functors~\hbox{$\bfG\mapsto\rmB\bfG$}, the classifying space, and~$X\mapsto\bfG X$. Consequently, one may expect the centers of simplicial groupoids to be related to the classical homotopy theory of spaces, and as we will see in the examples below, this turns out to be true.  

The homotopy category of~$\bfG X$ is the fundamental groupoid~$\Pi_1 X$ of~$X$, so that the formula
\[
\Pi_1 X=\Ho(\bfG X),
\]
relates the two incarnations of the fundamental groupoid idea.

The center of the fundamental groupoid of~$X$ has been described in Proposition~\ref{prop:center_PiX}, and we can now ask for the homotopy coherent center~$\calZ(\bfG X)$ of the Dwyer-Kan loop groupoid of~$X$, and the relation between the two.


The homotopy coherent center in general has an~$E_2$ structure by Theorem~\ref{thm:E2}. {\it A fortiori}, it is an~$E_1=A_\infty$ monoid. For simplicial groupoids, it will turn out to be group-like by Corollary~\ref{cor:quasi_center_groupoid_abelian} below. Therefore, by Stasheff's recognition theorem~\cite{Stasheff:recognition} (as improved by May~\cite{May:recognition}), it must be equivalent to the loop space of some (classifying) space. In fact, using the~$E_2$ structure, that space will have its own delooping as well. The reader may wonder what these spaces are, and the following results will answer this question. 

\begin{proposition}\label{prop:center_formula}
For every simplicial groupoid~$\bfG$, we have
\[
\calZ(\bfG)\simeq\Omega(\Map(\rmB\bfG,\rmB\bfG),\id).
\]
\end{proposition}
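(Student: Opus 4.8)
The plan is to generalize the argument already used for simplicial groups in Corollary~\ref{cor:DWmodel} from the one-object case to the many-object (groupoid) case. Recall that for a simplicial group $G$ the chain of equivalences ran: $\calZ(G)$ is the homotopy fixed points of the conjugation action (Theorem~\ref{thm:homotopy_fixed_points}), which in turn is the section space of the Borel construction bundle $\rmE G\times_G G_{\mathrm{conj}}\to\rmB G$, which is equivalent over $\rmB G$ to the free loop space evaluation fibration $\Lambda\rmB G\to\rmB G$, whose section space is $\Omega(\Map(\rmB G,\rmB G),\id)$. For a general simplicial groupoid $\bfG$ the same structural picture should hold, with $\rmB\bfG$ a possibly disconnected classifying space and with the role of conjugation played by the self-action of $\bfG$ on its morphism spaces.

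**First I would** set up the groupoid analogue of the cosimplicial replacement $\Pi^\bullet\bfG$ and identify $\calZ(\bfG)=\Tot\Pi^\bullet\bfG$ with a homotopy fixed point / homotopy coherent section object. The key is to rewrite $\Pi^n\bfG=\prod_{x_0,\dots,x_n}\Map(\bfG(x_1,x_0)\times\dots\times\bfG(x_n,x_{n-1}),\bfG(x_n,x_0))$ using the invertibility of all morphisms in $\bfG_k$. As in the proof of Theorem~\ref{thm:homotopy_fixed_points}, invertibility lets me re-coordinatize the bar-type simplicial space $\bfG(x_1,x_0)\times\dots\times\bfG(x_n,x_{n-1})$ in terms of a free (two-sided) action, producing $\bfG$-equivariant isomorphisms of the same shape as the maps $\rmB_n(G)\times G\to\rmE_n(G)$ there. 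Assembling these across the totalization identifies $\calZ(\bfG)$ with an equivariant mapping space $\Map_{\bfG}(\rmE\bfG,\bfG_{\mathrm{conj}})$, the homotopy fixed points of $\bfG$ acting on its own morphism spaces by conjugation, where $\rmE\bfG$ is the appropriate free resolution (the geometric realization of the simplicial object with $\rmE_n$ given by $(n{+}1)$-fold composable tuples, faces omitting factors, degeneracies repeating). The main technical care is that objects vary, so "conjugation" is really the groupoid of arrows acting on the disjoint union of endomorphism (and more generally morphism) spaces; this is exactly the content recorded in Remark~\ref{rem:nat} about $\calN(\id,\id)$.

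**Next I would** reinterpret this homotopy fixed point object as a space of sections. The Borel construction $\rmE\bfG\times_{\bfG}\bfG_{\mathrm{conj}}$ fibers over $\rmB\bfG$, and the equivariant mapping space is precisely the space of sections of that fibration, just as in the group case. Then I invoke the (folklore, cited) equivalence of the Borel construction on the conjugation object with the free loop space: over $\rmB\bfG$ there is an equivalence $\rmE\bfG\times_{\bfG}\bfG_{\mathrm{conj}}\simeq\Lambda\rmB\bfG$. The references \cite{Goodwillie}, \cite{Burghelea+Fiedorowicz}, \cite{Benson} establish this identification for classifying spaces of groups; the groupoid version follows componentwise over $\pi_0\rmB\bfG$ together with compatibility under the functor $\bfG\mapsto\rmB\bfG$. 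Taking section spaces, the sections of the free loop evaluation fibration $\Lambda\rmB\bfG\to\rmB\bfG$ are exactly $\Omega(\Map(\rmB\bfG,\rmB\bfG),\id)$, yielding the claimed equivalence $\calZ(\bfG)\simeq\Omega(\Map(\rmB\bfG,\rmB\bfG),\id)$.

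**The hard part will be** handling the many-object bookkeeping cleanly, since unlike the single simplicial group $G$ the classifying space $\rmB\bfG$ need not be connected and the conjugation action permutes endomorphism spaces of different objects. I expect the re-coordinatization step to be the genuine obstacle: one must produce $\bfG$-equivariant isomorphisms playing the role of $\rmB_n(G)\times G\cong\rmE_n(G)$ but now respecting the source/target decoration $x_0,\dots,x_n$, and verify they are compatible with all coface and codegeneracy maps of $\Pi^\bullet\bfG$ so that they assemble after totalization. A convenient way to finesse this is to reduce to the connected case via Proposition~\ref{prop:center_PiX}-style splitting over $\pi_0\rmB\bfG$, where on each component $\bfG$ is Dwyer-Kan equivalent (Theorem~\ref{thm:equivalences}) to a simplicial \emph{group}, at which point Corollary~\ref{cor:DWmodel} applies directly; one then checks that the two sides of the asserted equivalence are compatible with this decomposition. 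I would likely present the argument this second way, using the invariance of $\calZ$ under Dwyer-Kan equivalence to transport the known group-case formula, which keeps the combinatorics of the re-coordinatization implicit rather than explicit.
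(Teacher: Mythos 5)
Your proposal is correct and, in the form you say you would actually present it (splitting over $\pi_0\rmB\bfG$, using Dwyer--Kan invariance to replace each connected piece by a simplicial group, and then applying Corollary~\ref{cor:DWmodel}), it is essentially the paper's own proof, which consists of exactly this: Corollary~\ref{cor:DWmodel} plus the compatibility of centers with disjoint unions. The explicit equivariant re-coordinatization of $\Pi^\bullet\bfG$ that you sketch first is a workable but unnecessary detour, and you were right to set it aside.
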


\begin{proof}
Similarly to the preceding proposition, this follows from Corollary~\ref{cor:DWmodel} and the compatibility of centers with disjoint unions.
\end{proof}

The center of any groupoid is always an abelian group rather than just an abelian monoid. This is far from obvious in the homotopy coherent setting, but the following result affirms that it still holds true.

\begin{corollary}\label{cor:quasi_center_groupoid_abelian}
For every simplicial groupoid~$\bfG$, the abelian monoid~$\pi_0\calZ(\bfG)$ is an abelian group.
\end{corollary}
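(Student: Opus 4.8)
The plan is to combine the commutativity already in hand with the loop-space description of Proposition~\ref{prop:center_formula}. By the $E_2$-multiplication of Theorem~\ref{thm:E2} and its corollary, the set $\pi_0\calZ(\bfG)$ is a commutative monoid, with unit the class of the identity family $(\id_x)$. Since a commutative monoid is an abelian group as soon as every element is invertible, it remains only to produce inverses, and for this I would transport the question to a fundamental group.

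First I would invoke Proposition~\ref{prop:center_formula} to fix an equivalence
\[
\calZ(\bfG)\simeq\Omega(\Map(\rmB\bfG,\rmB\bfG),\id).
\]
The set of components of any based loop space $\Omega(Y,y)$ is the fundamental group $\pi_1(Y,y)$, with group operation given by concatenation of loops; in particular $\pi_0\calZ(\bfG)$ is in bijection with the \emph{group} $\pi_1(\Map(\rmB\bfG,\rmB\bfG),\id)$. A bijection of sets is not enough on its own, so the real content is to match the monoid structure coming from Theorem~\ref{thm:E2} with this group structure.

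To do so I would run an Eckmann--Hilton argument on the loop space directly. The mapping space $\Map(\rmB\bfG,\rmB\bfG)$ is itself a monoid under pointwise composition of self-maps, strictly unital at the basepoint $\id$; it therefore endows $\Omega(\Map(\rmB\bfG,\rmB\bfG),\id)$ with a second, pointwise-composition multiplication in addition to loop concatenation. These two operations share the unit $\id$ and satisfy the interchange law, so on $\pi_0$ they agree and are commutative, by Eckmann--Hilton; since loop concatenation realizes the group $\pi_1(\Map(\rmB\bfG,\rmB\bfG),\id)$, the pointwise-composition operation carries this same group structure.

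The remaining, and I expect principal, obstacle is to verify that the equivalence of Proposition~\ref{prop:center_formula} carries the $E_2$-multiplication of the homotopy coherent center (given by the family $\mu$ of composition maps in Theorem~\ref{thm:E2}) to pointwise composition of self-maps of $\rmB\bfG$. This amounts to checking that the comparison maps assembled in the proofs of Theorem~\ref{thm:homotopy_fixed_points}, the subsequent folklore presentation, and Corollary~\ref{cor:DWmodel} are compatible with composition. Granting this multiplicativity, the monoid $\pi_0\calZ(\bfG)$ is identified with $\pi_1(\Map(\rmB\bfG,\rmB\bfG),\id)$ as monoids, hence is an abelian group, which is the assertion.
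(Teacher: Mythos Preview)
Your approach is precisely the paper's: invoke Proposition~\ref{prop:center_formula} and read off that $\pi_0\calZ(\bfG)\cong\pi_1(\Map(\rmB\bfG,\rmB\bfG),\id)$ is a fundamental group. The paper compresses this to a single sentence and leaves implicit exactly the multiplicativity you flag; your Eckmann--Hilton reduction (matching the McClure--Smith product, which is composition in~$\bfG$, with pointwise composition of self-maps of~$\rmB\bfG$) is the honest way to close that step.
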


\begin{proof}
Proposition~\ref{prop:center_formula} allows us to identify~$\pi_0\calZ(\bfG)$ with a fundamental group of a space.
\end{proof}

If~$X$ is a Kan complex, then it is known that the mapping space~$\Map(X,X)$ already models the derived mapping space, and it is also a Kan complex. Since~$X\simeq\rmB\bfG X$, the proposition above has also the following corollary.

\begin{corollary}
For every Kan complex~$X$, we have
\[
\calZ(\bfG X)\simeq\Omega(\Map(X,X),\id).
\]
\end{corollary}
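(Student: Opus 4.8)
The final statement to prove is the following corollary:

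\begin{quote}
For every Kan complex $X$, we have $\calZ(\bfG X)\simeq\Omega(\Map(X,X),\id)$.
\end{quote}

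The plan is to derive this directly from Proposition~\ref{prop:center_formula} by substituting a suitable model for $\rmB\bfG X$ and invoking the two properties of Kan complexes that the statement preceding the corollary highlights. First I would apply Proposition~\ref{prop:center_formula} with $\bfG=\bfG X$, which gives immediately
\[
\calZ(\bfG X)\simeq\Omega(\Map(\rmB\bfG X,\rmB\bfG X),\id).
\]
The entire task therefore reduces to replacing $\rmB\bfG X$ by $X$ inside the right-hand side in a way that preserves the relevant homotopy type of the based loop space.

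Next I would invoke the Dwyer--Kan equivalence recorded earlier in the section: the functors $\bfG\mapsto\rmB\bfG$ and $X\mapsto\bfG X$ are inverse equivalences between the homotopy theories of simplicial groupoids and of spaces, so that $\rmB\bfG X\simeq X$ as spaces (this is the counit of the adjunction, which is a weak equivalence). The point emphasized in the remark just before the corollary is that for a \emph{Kan} complex $X$ the plain mapping space $\Map(X,X)$ already computes the correct (derived) mapping space and is itself a Kan complex. Thus, choosing the weak equivalence $\rmB\bfG X\to X$ (or its inverse in the homotopy category) and post-/pre-composing, one obtains a weak equivalence of mapping spaces
\[
\Map(\rmB\bfG X,\rmB\bfG X)\simeq\Map(X,X),
\]
under which the two identity vertices correspond. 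Taking based loop spaces at the identity then yields the asserted equivalence
\[
\Omega(\Map(\rmB\bfG X,\rmB\bfG X),\id)\simeq\Omega(\Map(X,X),\id),
\]
and composing with the equivalence from Proposition~\ref{prop:center_formula} completes the argument.

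The main obstacle, and the only point deserving genuine care, is justifying that a weak equivalence $\rmB\bfG X\to X$ induces a weak equivalence on the mapping spaces $\Map(-,-)$ in both variables simultaneously, sending identity to identity. For a general source this is where one would need $X$ to be fibrant (Kan) so that $\Map(X,X)$ is homotopically meaningful and the composition pairing behaves well; this is exactly the hypothesis supplied. I would handle it by noting that $\rmB\bfG X$ is always a Kan complex (being a classifying space of a simplicial groupoid) and that between Kan complexes a weak equivalence induces weak equivalences on mapping complexes by the standard simplicial homotopy theory of $\bfS$, with the identity vertices matching up to the expected homotopy. All remaining steps are the routine bookkeeping of transporting a basepoint through an equivalence and forming loop spaces, which I would not spell out in detail.
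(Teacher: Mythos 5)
Your proposal is correct and follows essentially the same route as the paper: apply Proposition~\ref{prop:center_formula} to $\bfG X$, use the Dwyer--Kan weak equivalence between $X$ and $\rmB\bfG X$, and invoke the Kan hypothesis to ensure that $\Map(X,X)$ models the derived mapping space so that the based loop spaces agree. The paper compresses this into the one-sentence remark preceding the corollary, while you spell out the zig-zag of mapping-space equivalences and the basepoint bookkeeping (modulo a harmless slip calling the unit $X\to\rmB\bfG X$ the counit), but the underlying argument is the same.
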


The description of spaces such as~$\Omega(\Map(X,X),\id)$ is a classical subject of (unstable) homotopy theory, and we will now discuss more specific classes of examples in order to demonstrate the complexity of the matter. 


\subsection*{Specific classes of spaces}

Given a space~$X$, we have the canonical homomorphism
\begin{equation}\label{eq:Gottlieb_map}
\pi_0\calZ(\bfG X)\longrightarrow\rmZ(\Pi_1X).
\end{equation}
This is the edge homomorphism of our obstruction theory spectral sequence in Section~\ref{sec:otss}, and we can discuss the problem whether or not this map~\eqref{eq:Gottlieb_map} is injective or surjective here. We will see that neither has to be the case.


\begin{example}
As the simplest case, consider a classifying space~$X=\rmB\Gamma=\rmK(\Gamma,1)$ for the discrete group~$\Gamma$. In this case, we should not expect any higher homotopy structure, and indeed, the space~$\Omega(\Map(X,X),\id)$ is homotopically discrete, with
\[
\calZ(\bfG X)\simeq\pi_0\calZ(\bfG X)\cong\rmZ(\Gamma),
\]
the center of the fundamental group.
\end{example}

In order to encounter higher homotopy structure, we may generalize this example in at least two ways: Replace~$X=\rmB\Gamma=\rmK(\Gamma,1)$ with an Eilenberg-Mac Lane space~$\rmK(A,n)$ for any abelian group~$A$ and any~$n\geqslant2$. Or, replace the aspherical circle~\hbox{$\rmB\bbZ=\rmK(\bbZ,1)=\rmS^1$} by a sphere~$\rmS^n$ of dimension~$n\geqslant2$. These will be our next two classes of examples.

\begin{example}\label{ex:EMspaces}
Let~$A$ be an abelian group, and let~$X=\rmK(A,n)$ be an Eilenberg-Mac Lane space of type~$(A,n)$ for any integer~$n\geqslant2$. Then the homotopy type of the space of self-maps is
\[
\Map(\rmK(A,n),\rmK(A,n))\simeq\Hom(A,A)\times\rmK(A,n),
\]
see~\cite{Thom} for the original argument and~\cite[25.2]{May} for the simplicial version. Since the space~$\Omega(\Map(X,X),\id)$ of loops based at the identity depends only on the component of the identity, we therefore obtain an equivalence
\[
\calZ(\bfG\rmK(A,n))\simeq\Omega\rmK(A,n)\simeq\rmK(A,n-1).
\]
This means that the ablelian group~$\pi_0\calZ(\bfG\rmK(A,n))$ is trivial. (In particular, it is  isomorphic to~$\rmZ(\Pi_1\rmK(A,n)))$ which is also trivial.) But, we see that the center can have arbitrary higher homotopy groups. This information will already be lost when one passes from the center~$\calZ(\bfG X)$ to its group~$\pi_0\calZ(\bfG X)$ of components, let alone~$\rmZ(\Pi_1 X)$.
\end{example}

\begin{example}\label{ex:spheres}
Let~$X=\rmS^n$ be an~$n$-dimensional sphere,~$n\geqslant2$. The homotopy groups of the homotopy coherent center~$\calZ(\bfG\rmS^n)$ can be computed from the fibration sequence
\[
\Omega^n\rmS^n\longrightarrow\Map(\rmS^n,\rmS^n)\longrightarrow\rmS^n.
\]
Let us first consider the stable case~$n\geqslant3$. In that case, in order to compute~$\pi_0$, the boundary operator~$0=\pi_2\rmS^n\to\pi_{n+1}\rmS^n$ must be zero and~$\pi_0\calZ(\bfG\rmS^n)$ is the stable~$1$-stem~$\bbZ/2$. The computation of the higher homotopy groups involves the boundary operator
\[
\pi_k\rmS^n\longrightarrow\pi_{k+(n-1)}\rmS^n,
\]
This is the Whitehead product with the identity~$\iota_n$ of~$\rmS^n$. In particular, the first potentially nonzero contribution is given by the Whitehead square of the identity, and that element figures prominently in the Hopf invariant one problem: the Whitehead square~$[\iota_n,\iota_n]$ is zero if and only if there is an element of Hopf invariant one in~$\pi_{2n+1}(S^{n+1})$, and this is very rarely the case (by~\cite{Adams}: only if~$n=3,7$ in our range). And, the divisibility of the class~$[\iota_n,\iota_n]$ is related to the (strong) Kervaire invariant one problem, see~\cite{BJM}. In the meta-stable case~$n=2$, in order to compute~$\pi_0\calZ(\bfG\rmS^2)$, the boundary operator~\hbox{$\bbZ\cong\pi_2(\rmS^2)\to\pi_3(\rmS^2)\cong\bbZ$} hits~$[\iota_2,\iota_2]$ which is divisible by~$2$ but not by~$4$, so that~$\pi_0\calZ(\bfG\rmS^2)\cong\bbZ/2$ also in this case. Indeed, for~$\rmS^2$, the entire homotopy type of the identity component of the mapping space is known, see~\cite{Hansen:1} and~\cite{Hansen:2}. The result is
\[
\Map(\rmS^2,\rmS^2)_{\id}\simeq\mathrm{SO}(3)\times\tilde{\Omega}^2_0\rmS^2,
\]
where~$\tilde{\Omega}^2_0\rmS^2$ is the universal cover of a component of the double loop space of the~$2$-sphere; it does not matter which component. The existence of such a homotopy equivalence also implies that the abelian group~$\pi_0\calZ(\bfG\rmS^2)$ has order~$2$, of course.
\end{example}

Example~\ref{ex:spheres} clearly shows that the map~\eqref{eq:Gottlieb_map} need not be injective, and surjectivity of~\eqref{eq:Gottlieb_map} can also fail in this context. In fact, the image has been studied from a different point of view in~\cite{Gottlieb}, and is commonly called the {\it Gottlieb subgroup} of the fundamental group. It can be as complicated as algebraically possible: If~$\Gamma$ is a discrete group, and~$G$ is a subgroup of its center, then there is a connected space~$X$ and an isomorphism~$\Gamma\cong\pi_1(X)$ such that this isomorphism sends~$G$ isomorphically to the Gottlieb subgroup of~$X$, see~\cite{Varadarajan}. 


\section*{Acknowledgment}

This research has been supported by the Danish National Research Foundation through the Centre for Symmetry and Deformation (DNRF92). I would like to thank Tyrone Crisp, Jesper Grodal, Ehud Meir, and Nathalie Wahl for their interest and comments, and I am especially grateful to Moritz Groth for many long discussions.



\vfill

\parbox{\linewidth}{%
Markus Szymik\\
Department of Mathematical Sciences\\
University of Copenhagen\\
Universitetsparken 5\\
2100 Copenhagen \O\\
Denmark\\
\phantom{ }\\
\href{mailto:szymik@math.ku.dk}{szymik@math.ku.dk}}


\end{document}